\newtheorem{theorem}{Theorem}[section]
\newtheorem{lemma}[theorem]{Lemma}
\newtheorem{proposition}[theorem]{Proposition}
\newtheorem{corollary}[theorem]{Corollary}
\theoremstyle{definition}
\newtheorem{definition}[theorem]{Definition}
\theoremstyle{remark}
\newtheorem{remark}[theorem]{Remark}
\newtheorem{example}[theorem]{Example}
\newtheorem{construction}[theorem]{Construction}
\newcommand{\Z}{\mathbb{Z}}
\newcommand{\N}{\mathbb{N}}
\newcommand{\CAlg}{\mathrm{CAlg}}
\newcommand{\Sp}{\mathrm{Sp}}
\newcommand{\dr}[3]{\Omega^{#3}_{#1/#2}}
\newcommand{\Alg}{\mathrm{Alg}}
\newcommand{\LMod}{\mathrm{LMod}}
\newcommand{\RMod}{\mathrm{RMod}}
\newcommand{\op}{\mathrm{op}}
\newcommand{\C}{\mathcal{C}}
\newcommand{\D}{\mathcal{D}}
\newcommand{\Nm}{\mathrm{Nm}}
\renewcommand{\S}{\mathbb{S}}
\newcommand{\id}{\mathrm{id}}
\newcommand{\gr}{\operatorname{gr}}
\newcommand{\Mod}{\mathrm{Mod}}
\newcommand{\Lan}{\mathrm{Lan}}
\newcommand{\R}{\mathbb{R}}
\newcommand{\F}{\mathcal{F}}
\newcommand{\TCart}{\mathrm{TCart}}
\newcommand{\CycSp}{\mathrm{CycSp}}
\newcommand{\W}{\mathbf{W}}
\newcommand{\THH}{\mathrm{THH}}
\newcommand{\Fr}{\mathrm{Fr}}
\newcommand{\Fin}{\mathrm{Fin}}
\newcommand{\Mack}{\mathrm{Mack}}
\newcommand{\TP}{\mathrm{TP}}
\newcommand{\TR}{\mathrm{TR}}
\newcommand{\cris}{\mathrm{cris}}
\renewcommand{\F}{\mathbb{F}}
\renewcommand{\R}{\mathcal{R}}
\newcommand{\incl}{\mathrm{incl}}
\newcommand{\loccit}{\textit{loc. cit.}}
\newcommand{\Span}{\mathrm{Span}}
\newcommand{\lax}{\mathrm{lax}}
\newcommand{\E}{\mathcal{E}}
\newcommand{\Ab}{\mathrm{Ab}}
\newcommand{\fgt}{\mathrm{fgt}}
\DeclareMathOperator{\Fun}{\mathrm{Fun}}
\newcommand{\lbar}[1]{\mkern 1.5mu\overline{\mkern-1.5mu#1\mkern-1.5mu}\mkern 1.5mu}
\author{Konrad Bals}
\title{The topological Cartier--Raynaud ring}
\begin{document}

	\begin{abstract}
		We prove that the $\infty$-category of $p$-typical topological Cartier modules, recently introduced by Antieau--Nikolaus, over some base $A$ is equivalent to the $\infty$-category of modules over a ring spectrum $\mathcal R_A$, which we call the topological Cartier--Raynaud ring. Our main result is an identification of the homotopy groups of $\mathcal R_A$. 
		In particular, for $A=W(k)$ the homotopy groups $\pi_*\mathcal R_{W(k)}$ recover the classical Cartier--Raynaud ring constructed by Illusie--Raynaud. Moreover, along the way we will describe the compact generator of $p$-typical topological Cartier modules and classifies all natural operations on homotopy groups of $p$-typical topological Cartier modules. 
	\end{abstract}

	\maketitle
	
	\section{Introduction}
	
	Given a scheme $S$ over a ring $k$. One goal of algebraic geometry is to associate to this situation a cohomology theory $H^*(S)$ and deduce geometric properties of $S$ from algebraic information on $H^*(S)$. In the last years there has been an amazing development of such cohomology theories by analyzing constructions coming from stable homotopy theory: 
	
	To the scheme $S$ one can associate its topological Hochschild homology $\THH(S)$ and variants of it. Define $\TP(S):=\THH(S)^{tS^1}$ as the Tate construction of the $S^1$-action on $\THH(S)$. This spectrum prominently features in the development of prismatic cohomology, in fact in some situations it actually computes prismatic cohomology: 
	
	\begin{theorem}[\cite{BMS19}]\label{TP_and_prism}
		Let $R$ be a quasi-syntomic $k$-algebra, then there exists a motivic filtration on topological periodic cyclic homology $\TP(R)$ with 
		\[\gr^n\TP(R)\simeq \widehat\Delta_{R}^{(1)}\{n\}[2n]\]
		where the right hand side is version of prismatic cohomology, Breuil--Kisin twisted and shifted into degree $2n$.
	\end{theorem}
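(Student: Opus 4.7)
The plan is to construct the motivic filtration by quasi-syntomic descent from a basis of quasiregular semiperfectoid (qrsp) rings, and then identify its graded pieces with prismatic cohomology. Recall that a qrsp ring $S$ is a $p$-complete ring admitting a surjection from a perfectoid ring with $L_{S/\Z_p}[-1]$ being $p$-completely flat; such rings form a basis of the quasi-syntomic site. My first step is to establish that for a qrsp $S$ the spectrum $\TP(S)$ (implicitly $p$-completed) is concentrated in even degrees. For a perfectoid ring $R$ this follows from Bökstedt periodicity: one has $\pi_*\THH(R)\cong \pi_0\THH(R)[u]$ with $|u|=2$, so the Tate spectral sequence computing $\TP(R)=\THH(R)^{tS^1}$ is concentrated on even rows and degenerates for degree reasons. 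One then descends evenness to qrsp $S$ along a perfectoid cover $R\to S$, using that the associated descent spectral sequence is supported in even total bidegree.

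Granting evenness, the double-speed Postnikov filtration on $\TP(S)$ for qrsp $S$ provides a complete exhaustive filtration with $\gr^n\TP(S)\simeq \pi_{2n}\TP(S)[2n]$. The next step identifies $\pi_{2n}\TP(S)$ with $\widehat{\Delta}_S^{(1)}\{n\}$. For this one exhibits the pair $(\pi_0\TP(S),\Fil^\bullet_N\pi_0\HC(S))$ — with the Nygaard filtration on $\HC$ supplying the distinguished ideal — as a prism canonically attached to $S$, and then invokes the universal property of prismatic cohomology to produce the comparison map. The Breuil--Kisin twist $\{n\}$ is built into the identification via the canonical $S^1$-equivariant structure on $\TP$: the invertible $\pi_0\TP(\Z_p)$-module $\pi_2\TP(\Z_p)$ is precisely the Breuil--Kisin line $\{1\}$, and $\pi_{2n}\TP(S)$ is naturally a module over its $n$-th tensor power.

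Finally, both the filtration and the identification are extended to arbitrary quasi-syntomic $R$ by right Kan extension along the inclusion of qrsp rings. Quasi-syntomic descent for $\TP$ follows from descent for $\THH$ combined with the fact that $(-)^{tS^1}$ preserves limits of uniformly bounded-below spectra, while descent for Nygaard-completed Frobenius-twisted prismatic cohomology is built into its definition. Hence the filtration and the comparison isomorphism on graded pieces propagate from the qrsp basis to all quasi-syntomic $k$-algebras.

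I expect the main obstacle to be the evenness of $\TP(S)$ for qrsp $S$. While perfectoid evenness is essentially an immediate consequence of Bökstedt periodicity, the descent step requires controlling the convergence of a potentially unbounded descent spectral sequence and exploiting the precise structure of $\THH$ for qrsp rings; once evenness is secured, the identification of graded pieces is comparatively formal through the universal property of the prism and an unwinding of the $S^1$-action that produces the Breuil--Kisin twist.
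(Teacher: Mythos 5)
This theorem is imported from \cite{BMS19} and the paper gives no proof of it, so there is no internal argument to compare yours against; I can only measure your sketch against the actual proof of Bhatt--Morrow--Scholze. At the level of strategy you have reproduced it faithfully: evenness of $\TP(S)$ for quasiregular semiperfectoid $S$, the double-speed Postnikov filtration there, identification of $\pi_0\TP(S)$ with the Nygaard-completed prism and of $\pi_2\TP$ of a perfectoid base with the Breuil--Kisin line, and then quasi-syntomic descent (right Kan extension from the qrsp basis, using that $(-)^{tS^1}$ commutes with the relevant limits) to general quasi-syntomic $R$.

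One step, as written, would not go through. For qrsp $S$ the perfectoid ring $R$ you choose \emph{surjects onto} $S$; the map $R\to S$ is not a quasi-syntomic cover, so there is no descent spectral sequence along it and evenness of $\THH(S)$ cannot be ``descended'' from $R$ in the way you describe. The actual mechanism is different: $\THH(S)$ is a $\THH(R)$-algebra, Bökstedt periodicity gives $\pi_*\THH(R)\cong R[u]$ after $p$-completion, base change along $\THH(R)\to R$ produces $\HH(S/R)$, whose $p$-complete homotopy is the divided power algebra on the flat $S$-module $L_{S/R}[-1]$ placed in even degrees, and one bootstraps evenness of $\THH(S)$ from this; only then does the Tate spectral sequence for $\TP(S)=\THH(S)^{tS^1}$ degenerate. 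You correctly flag this as the main obstacle but propose the wrong tool for it. A second, milder caveat: in \cite{BMS19} the identification $\pi_{2n}\TP(S)\cong\widehat\Delta_S^{(1)}\{n\}$ is essentially the \emph{construction} of the right-hand side; deducing it from the universal property of prisms, as you suggest, is the later Bhatt--Scholze comparison and requires separately producing the $\delta$-structure on $\pi_0\TP(S)$. With those corrections your outline is the standard proof.
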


	This connection between arithmetic cohomology theories and variants of topological Hochschild homology is not new. In \cite{HM97} Hesselholt--Madsen describe the genuine $S^1$-action on topological Hochschild homology and construct the spectrum $\TR(S):=\lim_{n}\THH(S)^{C_{p^n}}$\footnote{By the result of \cite{NS18} this can be completely expressed in terms of non-genuine information, however, that makes the formula not as nice.}, topological restriction homology. It still carries a non-genuine $S^1$-action and we have in some situations:
	
	\begin{theorem}[{\cite[Theorem C]{Hes96}}]\label{TR_WOmega}
		Let $S$ be a smooth scheme over $\F_p$, then 
		\[\pi_*\TR(S)\cong W\dr S{\F_p}*\]
		where $W\dr S{\F_p}*$ is the de Rham--Witt complex of $S$ relative $\F_p$ computing $H^*_{\cris}(S)$.
	\end{theorem}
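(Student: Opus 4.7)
The plan is to apply Illusie's universal characterisation of $W\dr{-}{\F_p}*$ as the initial $p$-typical Witt complex: I will equip $\pi_*\TR(S)$ with a natural Witt complex structure, obtain a comparison map from $W\dr{S}{\F_p}*$, and verify it is an isomorphism by reducing to the polynomial case.

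First I would construct a Witt complex structure on $\pi_*\TR(S)$ functorially in the smooth $\F_p$-scheme $S$. The graded-commutative ring structure comes from the $\Einfty$-ring structure on $\TR(S)$. In degree zero, the identifications $\pi_0\THH(S)^{C_{p^n}} \cong W_{n+1}(\mathcal O_S)$ assemble in the inverse limit to give $\pi_0\TR(S) \cong W(\mathcal O_S)$. The Frobenius $F$ arises from the restriction/cyclotomic structure of $\TR$, the Verschiebung $V$ from the transfer maps $\THH(S)^{C_{p^{n-1}}} \to \THH(S)^{C_{p^n}}$, and the differential $d$ from Connes' operator on $\TR(S)$ coming from the residual $S^1$-action. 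A careful diagram chase through the cyclotomic structure maps then verifies the defining relations of a $p$-typical Witt complex, notably $FV = p$, $FdV = d$, and $d^2 = 0$.

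By Illusie's universal property this produces a natural map of graded rings
\[
\phi \colon W\dr{S}{\F_p}* \longrightarrow \pi_*\TR(S).
\]
To show $\phi$ is an isomorphism, I would reduce by Zariski descent (which both sides satisfy, using smoothness to control $\THH$) to the case where $S$ is affine smooth, and then via étale base change and standard reductions to $S = \Spec \F_p[x_1,\ldots,x_n]$.

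For polynomial algebras, the HKR theorem identifies $\pi_*\THH(\F_p[x_1,\ldots,x_n]) \cong \dr{\F_p[x_1,\ldots,x_n]}{\F_p}*$ as an $S^1$-equivariant ring spectrum. The spectral sequences computing $\pi_*\THH(-)^{C_{p^n}}$ then degenerate in this regular case and can be evaluated explicitly; passing to the limit over $n$ yields a graded ring isomorphic to the explicit presentation of $W\dr{\F_p[x_1,\ldots,x_n]}{\F_p}*$ in terms of basic Witt differentials. The principal obstacle is matching both sides compatibly with $F$, $V$, and $d$: tracking the behaviour of the Witt complex operations through the spectral sequence computation requires considerable bookkeeping, and this is where the bulk of Hesselholt's original argument is spent.
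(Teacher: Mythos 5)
This theorem is quoted from Hesselholt (\cite[Theorem C]{Hes96}); the present paper gives no proof of it, so there is nothing internal to compare against. Your overall strategy — endow $\pi_*\TR(S)$ with the structure of a $p$-typical Witt complex, invoke the universal property of $W\Omega^*_{S/\F_p}$ to produce the comparison map, and check it is an isomorphism after reducing by Zariski descent and \'etale base change to polynomial $\F_p$-algebras — is indeed the standard route, essentially the one taken by Hesselholt and later streamlined by Hesselholt--Madsen. Two points of care: the Frobenius on $\TR$ is induced by the \emph{inclusions of fixed points} $\THH(S)^{C_{p^n}}\to\THH(S)^{C_{p^{n-1}}}$, not by the cyclotomic restriction maps $R$ (those are what the limit defining $\TR$ is taken over); and among the Witt-complex axioms the genuinely delicate one is $Fd[a]=[a]^{p-1}d[a]$, which you do not mention.

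There is, however, a concrete error in the polynomial case: HKR does \emph{not} give $\pi_*\THH(\F_p[x_1,\dots,x_n])\cong\Omega^*_{\F_p[x_1,\dots,x_n]/\F_p}$. By B\"okstedt's theorem $\pi_*\THH(\F_p)\cong\F_p[\sigma]$ with $|\sigma|=2$, and for a smooth $\F_p$-algebra $A$ one has $\THH(A)\simeq \THH(\F_p)\otimes_{\F_p}\mathrm{HH}(A/\F_p)$, so $\pi_*\THH(A)\cong\Omega^*_{A/\F_p}\otimes_{\F_p}\F_p[\sigma]$; HKR identifies only the relative term $\mathrm{HH}_*(A/\F_p)$. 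The de Rham--Witt complex emerges only after passing to $C_{p^n}$-fixed points and taking the limit along $R$, and the entire content of the computation is the interaction of the B\"okstedt class $\sigma$ with the norm cofibre sequences $\THH(A)_{hC_{p^n}}\to\THH(A)^{C_{p^n}}\xrightarrow{R}\THH(A)^{C_{p^{n-1}}}$ and the associated homotopy-orbit spectral sequences. Starting from the incorrect identification of $\pi_*\THH$, the claimed ``degeneration and explicit evaluation'' cannot even be set up correctly, so this step of the argument as written would fail; it is not merely bookkeeping that is being deferred.
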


	The de Rham--Witt complex has been introduced by Bloch--Deligne--Illusie in \cite{Blo77} and \cite{Ill79} in order to study cristalline cohomology from an algebraic perspective, giving an alternative perspective to the site theoretic definition in \cite{Ber74}.

	Thus, in particular $\TR(S)$ knows about cristalline cohomology of $S$. In fact, this implies a special case of Theorem \ref{TP_and_prism}: By the machinery developed in \cite{BMS19} and \cite{BS22} over the prism $(\TR(\F_p)\cong \Z_p,p)$ prismatic cohomology agrees with cristalline cohomology, and in general, $\TP(S)\simeq\TR(S)^{tS^1}$ with the motivic filtration being the decalage of the Tate filtration on $\TP(S)\simeq\TR(S)^{tS^1}$. Using the computation of $\TR(S)$ from above this refines to an equivalence in $D(\Z_p)$ of
	\[\gr^n\TP(R)\simeq W\dr S{\F_p}*\]
	as shown in \cite{AN18}[Theorem 6.24].
	
	The Verschiebung and Frobenius operators on the cristalline cohomology groups can already be seen on the de Rham--Witt complex.
	There are algebraic maps on the complex level
	\[V,F\colon W\Omega_{S/\F_p}^*\to W\Omega_{S/\F_p}^*\]
	satisfying certain relations together with the differential. Over a general $\F_p$-algebra $k$ in \cite{IR83} Illusie and Raynaud describe the interaction of this structure as a module action on the de Rham--Witt complex by an explicit non-commutative graded ring $R(k)$:
	
	\begin{definition}
		
		For a ring $k$, the Cartier--Raynaud ring is given as 
		\[ R(k):=W(k)\{v,f,d\}/I_R. \]
		The quotient of the free associative graded (not necessarily central) $W(k)$-algebra on generators $v$ and $f$ in degree 0 and $d$ in degree 1. The ideal $I_R$ is generated by the following relations:
		\begin{alignat*}{3}
		f v&=p & vxf&=V(x)\\
		d f&=pf  d&v  d&=p d v\\
		fx&=F(x)f\qquad&xv&=F(x)v\\
		f  d v&= d& d^2&=0\\
		dx&=xd
		\end{alignat*}
		for all $x\in W(k)$ and where $V$ and $F$ are the Witt vector Verschiebung and Frobenius on $W(k)$.
		
	\end{definition}

	On the de Rham--Witt complex $W\Omega^*_{-/k}$ the elements $v,f$ act as Verschiebung $V$ resp. Frobenius $F$ and $d$ gives the differential $W\Omega^*_{-/k}\to W\Omega^{+1}_{-/k}$. Modules over $R(k)$ are called Cartier--Witt complexes and the study of the de Rham--Witt complex becomes an analysis of Cartier--Witt complexes. By \cite{HM04} the homotopy groups $\pi_*\TR(S)$ acquire a Verschiebung, a Frobenius and a differential becoming a module over $R(k)$ and in fact the equivalence in Theorem \ref{TR_WOmega} refines to an equivalence of Cartier--Witt complexes.
	
	Turning a statement on the homotopy groups into a statement in higher algebra, in \cite{AN18} Antieau and Nikolaus describe the spectrum $\TR$ as an object in the algebraic $\infty$-category of topological Cartier modules $\TCart_p$, which behave like a highly structured version of the category of Cartier--Witt complexes. 
	
	We make this precise by constructing a spectrum-level analogue of the Cartier--Raynaud ring as suggested in \cite{AN18}[Remark 3.13]: For every algebra $A\in\Alg(\TCart_p)$ the forgetful functor $\LMod_A(\TCart_p)\to\Sp$ lifts to an equivalence 
	\[\LMod_A(\TCart_p)\simeq \LMod_{\R_A}(\Sp)\]
	for a ring spectrum $\R_A$. In fact, the ring spectrum is uniquely determined by this property.
	
	\begin{definition}[Definition \ref{definition_Cartier_Raynaud}]
		For $A\in\Alg(\TCart_p)$ we call the $\mathbb{E}_1$-ring spectrum $\R_A$ the topological Cartier--Raynaud ring.
	\end{definition}

	Our main result is to completely describe the homotopy groups of $\R_A$ in terms of the homotopy groups $\pi_*A$. We compute:
	
	\begin{theorem}[Theorem \ref{main_theorem}]
		For $A\in\Alg(\TCart_p)$ there is an isomorphism
		\[\pi_*\R_A\simeq (\pi_*A)\{v,f,d\}/I_{\R}\]
		where $|v|=|f|=0$ and $|d|=1$ and with the ideal $I_{\R}$ generated by the relations
		\begin{alignat*}{3}
		f v&=p & vxf&=V_A(x)\\
		d f&=pf  d&v  d&=p d v\\
		fx&=F_A(x)f\qquad&xv&=F_A(x)v\\
		f  d v&= d\ (+\eta\ \text{if}\ p=2)& d^2&=\eta  d\\
		dx&=d_A(x)+(-1)^{|x|}xd
		\end{alignat*}
		for all $x\in \pi_*A$ and where $V_A,F_A, d_A:\pi_*A\to \pi_*A$ are induced by the topological Cartier module structure of $A$.
	\end{theorem}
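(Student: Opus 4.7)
The strategy is to realise $\R_A$ as the endomorphism ring spectrum of an explicit compact generator of $\LMod_A(\TCart_p)$ and then to compute it. The defining equivalence $\LMod_A(\TCart_p)\simeq\LMod_{\R_A}(\Sp)$ identifies $\R_A$, via Schwede--Shipley Morita theory, with $\map(F(\S),F(\S))\simeq UF(\S)$, where $U\colon\LMod_A(\TCart_p)\to\Sp$ is the forgetful functor and $F$ its left adjoint. Hence the whole theorem reduces to computing the spectrum $UF(\S)$ together with its composition multiplication on homotopy groups.

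To identify $UF(\S)$ I factor $F$ as the composite
\[\Sp\xrightarrow{\Free_{\TCart_p}}\TCart_p\xrightarrow{A\otimes -}\LMod_A(\TCart_p),\]
so that $F(\S)\simeq A\otimes\Free_{\TCart_p}(\S)$. Assuming that the forgetful functor $\TCart_p\to\Sp$ preserves colimits and the tensor product, this yields
\[UF(\S)\simeq A\otimes U\Free_{\TCart_p}(\S).\]
The earlier description of the compact generator of $p$-typical topological Cartier modules advertised in the abstract should provide an explicit splitting of $U\Free_{\TCart_p}(\S)$ as a sum of shifts of the sphere spectrum, indexed by reduced monomials in the symbols $v,f,d$; tensoring with $A$ realises $\pi_*UF(\S)$ as the free graded $\pi_*A$-module on these monomials.

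The remaining task is to identify the ring structure. The elements $v,f,d\in\pi_*\R_A$ are, by definition, the natural operations of Verschiebung, Frobenius, and differential on the underlying spectrum of any topological Cartier $A$-module, and by the classification of all such natural operations also promised in the abstract they generate $\pi_*\R_A$ together with the $\pi_*A$-action. The relations involving $\pi_*A$, namely $vxf=V_A(x)$, $fx=F_A(x)f$, $xv=F_A(x)v$ and $dx=d_A(x)+(-1)^{|x|}xd$, are direct consequences of the functoriality of these operations; the integer-coefficient relations $fv=p$, $df=pfd$ and $vd=pdv$ reproduce the structural identities of the classical Illusie--Raynaud ring on $\pi_0$ and extend unchanged to higher homotopy. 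To see the list is complete I run a normal-form argument: the relations rewrite any word in $v,f,d$ with coefficients in $\pi_*A$ as a $\pi_*A$-linear combination of precisely the basis monomials of the previous paragraph, and a rank count then forces the canonical surjection from the presented ring to $\pi_*\R_A$ to be an isomorphism.

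The main obstacle will be the two genuinely spectrum-level corrections to the classical Illusie--Raynaud picture, namely $d^2=\eta d$ and, at $p=2$, $fdv=d+\eta$. These cannot be read off from any single Cartier--Witt complex and reflect that $\R_A$ is an $\mathbb{E}_1$-ring spectrum rather than a discrete graded ring. I expect them to be established by a direct computation of $d\circ d$ and $f\circ d\circ v$ in $\pi_*\End(\Free_{\TCart_p}(\S))$ exploiting the explicit model of the compact generator, together with the fact that at $p=2$ the interaction between Verschiebung, differential, and Frobenius involves a map of $\B C_2$-indexed summands detecting the Hopf element $\eta$.
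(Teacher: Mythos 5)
Your overall strategy coincides with the paper's: identify $\R_A$ with the endomorphism ring of the compact generator $\R(\S)\boxtimes A$ via Schwede--Shipley, compute the underlying spectrum additively as a free $\pi_*A$-module on the reduced monomials $v^i, dv^i$ ($i\ge 0$) and $f^j, f^jd$ ($j>0$), and conclude by a normal-form plus rank-count argument. However, two points in the middle of your sketch are genuine gaps rather than routine details.

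First, the relations $vxf=V_A(x)$, $fx=F_A(x)f$, $xv=F_A(x)v$ are not ``direct consequences of the functoriality of these operations''. Naturality of $V,F,d$ only says they commute with maps of Cartier $A$-modules; their interaction with the $\pi_*A$-action is a semilinearity/projection-formula statement, and proving it is the technical heart of the argument. The paper does this by writing out the Verschiebung and Frobenius on the explicit decomposition $\R(\S)\boxtimes A\simeq\bigoplus_{n,m}p_!^n\big((p^*)^m\S[S^1]\otimes A_0\big)$ (Lemmas \ref{generators} and \ref{relations}); the relation $xv=vF_A(x)$ in particular is not read off the model at all but deduced algebraically from the others using the bimodule structure and $fdv=d$. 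Second, that explicit model of $V$ and $F$ is only available when $A=A_0[V]$ is induced from $\CycSp_p^\Fr$; your plan of computing on $\Free_{\TCart_p}(\S)$ and then ``tensoring with $A$'' does not directly produce the operators on $\R(\S)\boxtimes A$ for a general algebra $A$ (the forgetful functor $\TCart_p\to\Sp$ is only lax monoidal, and $V,F$ on a $\boxtimes$-product are not simply the tensor of the operators). The paper closes this by descending generators and relations along the counit $A[V]\to A$, which is split surjective on underlying spectra. Conversely, the step you single out as the main obstacle is not one: $d^2=\eta d$ and $fdv=d\ (+\eta$ for $p=2$) are already operator identities on the homotopy groups of any topological Cartier module by \cite[Lemma 3.33]{AN18} and are simply imported, so no fresh spectrum-level computation of $d\circ d$ or $f\circ d\circ v$ is needed.
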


	For a ordinary ring $k$ we have $W(k)\in\Alg(\TCart_p)$ and putting $A=W(k)$ we get the justification for our naming:
	
	\begin{corollary}
		Let $k$ be an ordinary ring, then there is an isomorphism of associative graded rings
		\[\pi_*\mathcal R_{W(k)}\cong R(k)\]
		
	\end{corollary}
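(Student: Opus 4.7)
The plan is simply to specialise Theorem \ref{main_theorem} to $A = W(k)$ and verify that every datum on the right-hand side reduces to the corresponding datum in the classical ring $R(k)$. Since $k$ is an ordinary ring, $W(k)$ is a discrete commutative ring, so its homotopy is concentrated in degree zero: $\pi_0 W(k) = W(k)$ and $\pi_n W(k) = 0$ for $n\neq 0$. The main theorem therefore presents $\pi_*\R_{W(k)}$ as the free associative graded $W(k)$-algebra on generators $v, f$ in degree $0$ and $d$ in degree $1$, modulo the relations listed in Theorem \ref{main_theorem} with $A = W(k)$. The generator degrees already agree with those of $R(k)$, so the question reduces to matching relations.

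Next, I would exploit discreteness to eliminate the extra terms appearing in Theorem \ref{main_theorem} that are absent from the classical definition. The element $\eta$ occurring in the relations $fdv = d + \eta$ (for $p=2$) and $d^2 = \eta d$ lies in $\pi_1$, so its image in $\pi_* W(k)$ is forced to vanish, collapsing these to $fdv = d$ and $d^2 = 0$. Similarly the internal differential $d_A\colon \pi_0 W(k) \to \pi_1 W(k)$ must be zero for degree reasons, and all $x\in\pi_*W(k)$ sit in degree $0$ so the Koszul sign $(-1)^{|x|}$ is trivial; hence $dx = d_A(x) + (-1)^{|x|}xd$ reduces exactly to the classical $dx = xd$.

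The remaining relations $fv = p$, $df = pfd$, $vd = pdv$, $vxf = V_A(x)$, $xv = F_A(x)v$, and $fx = F_A(x)f$ then coincide with those defining $R(k)$, provided one identifies the operators $V_A, F_A\colon \pi_0 W(k) \to \pi_0 W(k)$ induced from the topological Cartier module structure on $W(k)$ with the classical Witt vector Verschiebung and Frobenius $V, F$ on $W(k)$. This identification is the real content of the corollary and the main obstacle in the argument: it is not automatic from the presentation but must be extracted from the construction of the topological Cartier module structure on $W(k)$ in the sense of \cite{AN18}, where $W(k)$ is set up so that its $\TCart_p$-structure recovers the classical Witt vector operations on $\pi_0$. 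Granting this compatibility, the two presentations agree termwise, and the isomorphism $\pi_*\R_{W(k)} \cong R(k)$ follows.
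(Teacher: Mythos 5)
Your proposal is correct and matches the paper's (implicit) argument exactly: the corollary is deduced by specialising Theorem \ref{main_theorem} to the discrete algebra $A=W(k)$, observing that $\eta$ and $d_A$ vanish and the Koszul signs are trivial for degree reasons, and identifying $V_A,F_A$ with the Witt vector Verschiebung and Frobenius via the topological Cartier module structure on $W(k)$ from \cite{AN18}. You also correctly isolate that last identification as the only genuinely substantive input.
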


	By the construction of $\R_A$ in section \ref{Main}, the underlying spectrum carries the structure of a topological Cartier module itself. As such it is a compact generator of $\TCart_p$ and we get as a further consequence:
	
	\begin{corollary}[Corollary \ref{truncated}]
		The $\infty$-category $\LMod_A(\TCart_p)$ is compactly generator by a single generator and if $A$ is $n$-truncated then the compact generator is $(n+1)$-truncated.
	\end{corollary}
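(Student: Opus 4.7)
The first assertion follows immediately from the equivalence $\LMod_A(\TCart_p) \simeq \LMod_{\R_A}(\Sp)$ established earlier in the paper: the free rank-one left module $\R_A \in \LMod_{\R_A}(\Sp)$ is a compact generator, hence its preimage --- an object of $\LMod_A(\TCart_p)$ whose underlying spectrum is $\R_A$ itself --- is a compact generator of $\LMod_A(\TCart_p)$. In particular, the second part of the corollary reduces to showing that if $A$ is $n$-truncated, then $\pi_i \R_A = 0$ for $i > n+1$.

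To prove this vanishing I would exhibit a normal form for monomials in the explicit presentation of $\pi_* \R_A$ from Theorem \ref{main_theorem} and observe that each normal form has topological degree at most $n+1$. The commutation relations $fx = F_A(x) f$, $xv = F_A(x) v$, and $dx = d_A(x) + (-1)^{|x|} xd$ let me move any $x \in \pi_* A$ past the generators $f, v, d$, at the cost of applying $F_A$ or $V_A$ or introducing an additive $d_A(x)$ correction. After iteration, every monomial becomes a sum of terms of the form $x \cdot w$ with $x \in \pi_* A$ and $w$ a word in $\{v, f, d\}$. Such a $\{v, f, d\}$-word is then reduced using $fv = p$, $df = pfd$, $vd = pdv$, $fdv = d$, and $d^2 = \eta d$; in particular the last relation merges any two $d$'s in a word into a single $d$, the extra $\eta$-factor being absorbed into $\pi_{*+1} A$. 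The end result is that every homogeneous element of $\pi_* \R_A$ is an additive sum of terms of the form $f^a x$, $x v^b$, $f^a x d$, or $x d v^b$ with $x \in \pi_* A$, each of topological degree $|x|$ or $|x| + 1$. The assumption $\pi_i A = 0$ for $i > n$ then forces $\pi_i \R_A = 0$ for $i > n+1$.

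The main obstacle is to make this normal-form reduction genuinely unambiguous, since the commutation relations interact non-trivially. In particular, the $\eta$-corrections from $d^2 = \eta d$ (and from $fdv = d + \eta$ when $p = 2$), the $F_A$/$V_A$-twists, and the $d_A$-correction terms arising from moving $x$ past $d$ must all be bookkept consistently so that the resulting formal sum of normal-form monomials is independent of the chosen sequence of rewrites; this is exactly the kind of verification that can be organised by a confluence/diamond argument on the presented relations. Once the normal form is shown to be well defined, the degree bound and hence the truncatedness claim are immediate.
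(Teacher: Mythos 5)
Your argument runs in the opposite logical direction from the paper, and as written it is circular at both steps. For the first assertion you invoke the equivalence $\LMod_A(\TCart_p)\simeq\LMod_{\R_A}(\Sp)$, but in the paper that equivalence (Proposition \ref{Tcart_as_modules}) is itself \emph{deduced} from the fact that $\R(\S)\boxtimes A$ compactly generates $\LMod_A(\TCart_p)$, via Schwede--Shipley; the compact generation comes first, as the image of $\S$ under the left adjoint of the conservative, limit- and colimit-preserving forgetful functor to $\Sp$. For the second assertion you invoke the presentation of $\pi_*\R_A$ from Theorem \ref{main_theorem}, but that theorem appears \emph{after} the corollary and its proof rests on Theorem \ref{additive_Cartier--Raynaud}, the additive identification $\R(\S)\boxtimes A\simeq\bigoplus_{i\in\Z}(A\oplus\Sigma A)$ of underlying spectra. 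That additive splitting is the whole content here: the paper's proof of the corollary is the one-line observation that a sum of copies of $A$ and $\Sigma A$ is $(n+1)$-truncated when $A$ is $n$-truncated. So the missing ingredient in your write-up is precisely the computation of $\R(\S)\simeq\S[\W_p][V]\simeq\bigoplus_{n,m}p_!^n(p^*)^m\S[S^1]$ and the reordering of summands; without it you have no independent access to either the compact generator or to $\pi_*\R_A$.

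That said, if one grants Theorem \ref{main_theorem} as a black box, your degree-counting argument does go through, and more easily than you fear: for the truncatedness bound you only need that every homogeneous element is \emph{some} sum of normal-form monomials of the shapes you list, i.e. termination of the rewriting, not uniqueness of the normal form. Since all relations in $I_K$ are homogeneous, a class in degree $i>n+1$ is a sum of normal forms $f^ax$, $xv^b$, $f^axd$, $xdv^b$ in that same degree, forcing $|x|>n$ and hence $x=0$. The confluence/diamond verification you flag as the main obstacle is therefore unnecessary for this corollary (it would be needed to show the normal forms are linearly independent, which is part of Theorem \ref{main_theorem} itself, not of the truncatedness claim). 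I would still encourage you to restructure the argument around the additive splitting of the generator, which is both what the paper does and what makes the statement genuinely immediate.
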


	Finally, the underlying spectrum of $\R_A$ as a compact generator of the $\infty$-category $\LMod_A\TCart_p$ corepresents the homotopy group functor $\pi_*\colon\LMod_A\TCart_p\to\gr\Ab$. Thus, $\pi_*\R_A$ captures all information about the ring of natural endotransformations of $\pi_*$.

	\begin{corollary}[Corollary \ref{homotopy_operations}]
	For the functor $\pi_*\colon\LMod_A\TCart_p\to\gr\Ab$ the graded ring of natural transformations $\pi_*\to\pi_*$ is given by $\pi_*\R_A$. In particular, all natural operations on the homotopy groups of $p$-typical topological Cartier modules are essentially given by Verschiebung, Frobenius, the differential and combinations thereof.  
	\end{corollary}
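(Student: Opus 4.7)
The plan is to deduce the corollary as a formal consequence of the Yoneda lemma applied to the compact generator $\R_A$ furnished by Corollary~\ref{truncated}. Recall that by Definition~\ref{definition_Cartier_Raynaud} we have an equivalence $\LMod_A(\TCart_p)\simeq\LMod_{\R_A}(\Sp)$ under which the forgetful functor to $\Sp$ is corepresented by the free module $\R_A$. Hence for every $M$ and $n\in\Z$,
\[ \pi_n M \;\cong\; \pi_0\Map_{\R_A}(\Sigma^n\R_A,M) \;=\; [\Sigma^n\R_A,M]_{\R_A}, \]
so each individual $\pi_n$ is a representable functor on the compact object $\Sigma^n\R_A$ in the homotopy category.

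Next, I would apply the Yoneda lemma degree by degree: for $n,m\in\Z$,
\[ \mathrm{Nat}\bigl([\Sigma^n\R_A,-]_{\R_A},\,[\Sigma^m\R_A,-]_{\R_A}\bigr) \;\cong\; [\Sigma^m\R_A,\Sigma^n\R_A]_{\R_A} \;\cong\; \pi_{m-n}\R_A. \]
Thus a degree-$d$ graded-natural transformation is, at each source degree $n$, governed by an element of $\pi_d\R_A$, and composition of such transformations is translated via Yoneda into composition of the corresponding maps of $\R_A$-modules, which in turn is exactly multiplication in the graded ring $\pi_*\R_A$.

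To complete the identification, I would promote these degreewise data into a \emph{single} element of $\pi_d\R_A$ by enforcing compatibility with the suspension equivalence on the stable $\infty$-category $\LMod_A(\TCart_p)$: under the natural isomorphism $\pi_n(\Sigma M)\cong\pi_{n-1}(M)$, the element of $\pi_d\R_A$ governing the action on $\pi_n$ is forced to coincide with the one governing the action on $\pi_{n+1}$. Assembling over all $d$ then yields the desired isomorphism of graded rings $\pi_*\R_A\cong\mathrm{Nat}(\pi_*,\pi_*)$.

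The main conceptual point — and essentially the only obstacle — is to fix the convention that \enquote{graded ring of natural transformations $\pi_*\to\pi_*$} means morphisms of graded-abelian-group-valued functors compatible with the stable structure (without which one would recover the larger group $\prod_n\pi_d\R_A$ in each degree). Once this convention is in place, the result follows formally from the Yoneda lemma together with the identification of $\R_A$ as a compact generator established earlier.
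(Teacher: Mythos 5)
Your argument is correct and is essentially the paper's own proof: both identify $\LMod_A(\TCart_p)\simeq\LMod_{\R_A}(\Sp)$, corepresent each $\pi_n$ by $\Sigma^n\R_A$ in the $\Ab$-enriched homotopy category, and apply the (co)Yoneda lemma to identify natural transformations with $[\Sigma^{n+d}\R_A,\Sigma^n\R_A]\cong\pi_d\R_A$. The only point you treat more lightly than the paper is the double contravariance --- Yoneda identifies the ring of natural transformations with $\pi_*\mathrm{end}_{\LMod_{\R_A}}(\R_A)^{\op}$, which equals $\pi_*\R_A$ only after invoking $\mathrm{end}_{\LMod_{\R_A}}(\R_A)\simeq\R_A^{\op}$ --- while your explicit discussion of the stable-compatibility convention (ruling out $\prod_n\pi_d\R_A$) is a genuine subtlety the paper glosses over.
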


\subsection{Notation}
We will freely use the language of $\infty$-categories as developed in \cite{Lur09} and \cite{Lur16}. 

Moreover, we will work with the $\infty$-category $\Sp^{BS^1}:=\Fun(BS^1,\Sp)$ the $\infty$-category of spectra with $S^1$-action. The object $S^1$ carries an endomorphism given by multiplication by $p$, i.e $x\mapsto x^p$ and by abuse of notion we denote its induced map on classifying spaces by $p\colon BS^1\to BS^1$. Via restriction and Kan extension this gives us the adjunctions
\[\begin{tikzcd}
\Sp^{BS^1}\ar[r,"p^*"]&\Sp^{BS^1}\ar[l, bend right, shift right = 0, swap, "p_!"]\ar[l, bend left, shift left = 0, "p_*"]
\end{tikzcd}\]
classically referred to as $p_!=(-)_{hC_p}$ and $p_*=(-)^{hC_p}$, the homotopy orbits and homotopy fixed points with respect to the restricted $C_p$ action. However, this notation makes it clear, how to put a residual $S^1\cong S^1/C_p$-action on both of these adjoints.
For example, the residual action on $p_!\S[S^1]$ gives an equivalence $p_!\S[S^1]\simeq\S[S^1]$.

In the presence of group actions it is sometimes necessary to work with semilinear algebra, accounting for a twist by the group action. Nevertheless, for a ring $A$ we call a ring $R$ over $A$, an $A$-algebra, regardless of whether $A\to R$ factors through the $\mathbb{E}_1$-center of $R$.
In particular, for an ordinary ring $A$ we will write $A\{X_i\}$ for the free non-commutative, non-central $A$-algebra on generators $X_i$, i.e. $A\{X_i\}\cong A*\Z\{X_i\}$ the coproduct of associative algebras of $A$ and the group ring $\Z\{X_i\}$ of the free group on $\{X_i\}$.

\subsection{Acknowledgment}
I would like to thank Thomas Nikolaus for introducing me to topological Cartier modules and bringing up this topic. Moreover, I want to thank Achim Krause, Jonas McCandless and Shai Kedar for helpful discussions on this project, and Zhouhang \textsc{Mao} and Jonas McCandless for comments on an earlier draft. The project was funded by the
Deutsche Forschungsgemeinschaft (DFG, German Research Foundation) –
Project-ID 427320536 – SFB 1442, as well as under Germany's Excellence
Strategy EXC 2044 390685587, Mathematics Münster: Dynamics–Geometry–
Structure.

\section{The Category of p-typical Topological Cartier Modules}\label{Topological_Cartier_Modules}

We briefly recall the definitions of $p$-typical topological Cartier Module in \cite{AN18} and recall the perspective through Mackey functors made explicit in \cite{McC21}.

\begin{definition}[{\cite[Definition 3.1]{AN18}}]
	A topological Cartier module $X$ is a spectrum with $S^1$-action together with an $S^1$-equivariant factorization
	\[Nm_{C_p}\colon p_! X\xrightarrow V X\xrightarrow F p_*X\]
	of the $C_p$-Norm map.
\end{definition}

\begin{example}[\cite{AN18}[Construction 3.18]]
	For a cyclotomic spectrum $X$ the topological restriction homology $\TR(X)$ acquires the structure of a topological Cartier module.
\end{example}

By adjunction, a topological Cartier modules comes equipped with maps 
\begin{equation}\label{Verschiebung_Frobenius}
V\colon X\to p^*X,\qquad F\colon p^*X\to X.
\end{equation}
In particular as $X\simeq p^*X$ on underlying spectra, these maps give rise to operations $V,F\colon \pi_*X\to \pi_*X$. Moreover, the $S^1$ action on $X$ gives $\pi_*X$ the structure of a $\pi_*\S[S^1]\simeq \pi_*\S[d]/(d^2-\eta d)$-module for $|d|=1$, where $\eta$ is the unique non-zero element in $\pi_1\S$. The interaction of these operations has been studied in \cite{AN18} and they prove:

\begin{proposition}[{\cite[Lemma 3.33]{AN18}}]\label{V,F,d-structure_on_homotopy_groups}
	Given a topological Cartier module $X$, then the graded abelian group $\pi_*X$ carries a module action by the graded non-commutative ring
	\[(\Z\{v,f,d\}/I_\TCart\]
	where the action of $v$, $f$, and $d$ on $\pi_*X$ is given by $V$, $F$, and $d$, respectively, and $I_\TCart$ is generated by the relations
	
	\begin{alignat*}{3}
	f v&=p& d^2&=\eta  d\\
	d f&=pf  d&v  d&=p d v\\
	f  d v&= d\ (+\eta\ \text{if}\ p=2)
	\end{alignat*}
	
\end{proposition}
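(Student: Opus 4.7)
The plan is to verify each of the five defining relations directly as an identity of operators on $\pi_*X$, using only the structure already described: the $\S[S^1]$-module structure on $X$ and the $S^1$-equivariant adjoint maps $\tilde V\colon X\to p^*X$ and $\tilde F\colon p^*X\to X$ obtained from the factorization of $\Nm_{C_p}$.

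The easiest relation is $d^2=\eta d$, which holds in $\pi_*\S[S^1]$ and is therefore automatic from the $\pi_*\S[S^1]$-module structure on $\pi_*X$. For the two commutation relations $df=pfd$ and $vd=pdv$, the key point is that the $S^1$-action on $p^*X$ is pulled back along the $p$-fold cover $p\colon BS^1\to BS^1$. Since this map induces multiplication by $p$ on $\pi_1 S^1$, the operator on $\pi_*(p^*X)=\pi_*X$ given by the class $d\in\pi_1\S[S^1]$ agrees with $p$ times the operator $d$ coming from the original $S^1$-action on $X$. Applying $\pi_*$ to the $S^1$-equivariance squares for $\tilde V$ and $\tilde F$ then produces exactly $vd=pdv$ and $df=pfd$.

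The relation $fv=p$ amounts to computing the composite $\tilde F\circ\tilde V\colon X\to p^*X\to X$ on $\pi_*$. I would unwind the two adjunctions: writing $\tilde V=p^*V\circ u$ for the unit $u\colon X\to p^*p_!X$ of $(p_!,p^*)$ and $\tilde F=c'\circ p^*F$ for the counit $c'\colon p^*p_*X\to X$ of $(p^*,p_*)$, one sees
\[\tilde F\circ\tilde V\;=\;c'\circ p^*(\Nm_{C_p})\circ u.\]
Thus $\pi_*(\tilde F\tilde V)$ equals the composite of $\pi_*$ of $X\to X_{hC_p}\xrightarrow{\Nm}X^{hC_p}\to X$ on the underlying spectrum. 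This last composite is multiplication by $p$ because the composition of the canonical map and its adjoint factors the trace, and the norm on any spectrum, reduced through unit and counit of $(p_!,p_*)$, multiplies by the order of the group. Hence $fv=p$.

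The main obstacle is the final relation $fdv=d$ (with the parasitic $\eta$-correction at $p=2$). My approach is to analyze the composite
\[X\xrightarrow{\tilde V}p^*X\xrightarrow{\,d\,}p^*\Sigma X\xrightarrow{\tilde F}\Sigma X\]
by the same adjunction-unwinding as in the $fv=p$ case, rewriting it as a map factoring through the $C_p$-norm of the shift of $X$ twisted by the generator of $\pi_1 S^1$. Equivalently, one uses that the $S^1$-action on $p_!X\simeq X_{hC_p}$ and $p_*X\simeq X^{hC_p}$ together with the norm assembles the formula; the computation reduces to the identity $\Nm_{C_p}(d)=d$ in $\pi_1$ of an appropriate Tate construction, which is standard at odd primes and acquires the $+\eta$ correction at $p=2$ because $\pi_1\S=\Z/2\cdot\eta$ contributes an extra summand to the $C_2$-transfer of $d$. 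This last step is where I expect the greatest technical difficulty, and it is essentially the content of \cite[Lemma 3.33]{AN18}; I would refer to that computation after having reduced the statement to it.
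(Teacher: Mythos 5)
The paper offers no proof of this proposition at all --- it is quoted directly from \cite{AN18} (Lemma 3.33) --- so there is no internal argument to compare yours against; what you have written is strictly more than the paper provides. Your verifications of four of the five relations are correct: $d^2=\eta d$ is immediate from the $\pi_*\S[S^1]\cong\pi_*\S[d]/(d^2-\eta d)$-module structure; $vd=pdv$ and $df=pfd$ follow from $S^1$-equivariance of the adjoint maps $\tilde V\colon X\to p^*X$ and $\tilde F\colon p^*X\to X$ once one knows that $d$ acts on $p^*X$ as $p\cdot d_X$, which holds because $\Sigma^\infty$ of the degree-$p$ map $S^1\to S^1$ is multiplication by $p$ on the reduced summand $\Sigma\S$ with no $\eta$-correction; and your adjunction unwinding correctly identifies $\tilde F\circ\tilde V$ with the composite $X\to X_{hC_p}\xrightarrow{\Nm}X^{hC_p}\to X$. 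One caveat on the last point: that composite is $\sum_{g\in C_p}g_*$, not automatically multiplication by $p$, and you need that $C_p$ sits inside the connected group $S^1$ so that each $g_*$ is homotopic to the identity. The genuinely hard relation is $f d v=d\ (+\eta$ for $p=2)$, and your sketch of it --- reducing to an identity $\Nm_{C_p}(d)=d$ in a Tate construction --- is too vague to check and does not obviously capture the source of the $\eta$: in the actual computation one analyses the $C_p$-norm on $\S[S^1]\simeq\S\oplus\Sigma\S$, which is $p$ on the bottom cell and the identity on the top cell up to an off-diagonal term in $\pi_1\S$ that is responsible for the correction at $p=2$. Since you explicitly defer exactly this step to \cite{AN18}, your argument ultimately rests on the same external reference that the paper cites wholesale, which is consistent with how the paper treats the statement.
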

Moreover, the perspective in \eqref{Verschiebung_Frobenius} suggests, to view the Verschiebung and Frobenius as restriction and transfer along something like a $p$-fold covering map $S^1\to S^1$. This has been made precise in \cite{McC21}:

Following \cite{AMR17}, let $\W_p:=S^1\ltimes \N$ be the semidirect product of $\N$ acting on $S^1$ by $p$-power exponentiation, i.e. the map $\N\to \mathrm{End}(S^1)$ sends $n\in\N$ to the $p^n$-fold covering map $S^1\to (p^*)^nS^1\cong S^1$.
In \cite[Lemma 3.2.1]{McC21} McCandless verifies that the category $B\W_p$ is orbital in the sense of \cite{Bar21}, i.e. the finite coproduct completion $\Fin^\amalg B\W_p$ admits fibre products, such that the definition of Mackey functors on $B\W_p$ makes sense.

 By \cite[Proposition 5.5]{AN18} and \cite[Proposition 3.2.8]{McC21} we get an alternative description of p-typical topological Cartier modules.

\begin{proposition}
	There is an equivalence of categories 
	\[\TCart_p\simeq \Mack(B\W_p):=\Fun^\Pi(\Span(\Fin^\amalg B\W_p),\Sp)\]
\end{proposition}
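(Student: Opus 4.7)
My plan is to unpack the data of a Mackey functor on $B\W_p$ concretely and match it term by term with the data of a topological Cartier module. Both $\TCart_p$ and $\Mack(B\W_p)$ carry forgetful functors to $\Sp^{BS^1}$ (the latter via restriction along $BS^1 \hookrightarrow B\W_p$), and the strategy is to extend this identification on underlying $S^1$-spectra to an equivalence of $\infty$-categories.

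First I would analyze $\Fin^\amalg B\W_p$. Since $B\W_p$ has a single object with endomorphism $\infty$-monoid $\W_p = S^1 \rtimes \N$, the $S^1$-part yields the usual $S^1$-action while the $\N$-part is generated by the $p$-fold covering $p\colon BS^1 \to BS^1$. Unwinding the Mackey data, a product-preserving functor $M\colon \Span(\Fin^\amalg B\W_p)\to\Sp$ determines a spectrum $X:=M(*)\in\Sp^{BS^1}$ together with a transfer attached to the covariant $p$-leg and a restriction attached to the contravariant $p$-leg. Under the adjunctions $p_! \dashv p^* \dashv p_*$ these correspond respectively to maps $p_!X \to X$ (the Verschiebung $V$) and $X \to p_*X$ (the Frobenius $F$). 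Composing the two generating spans in $\Span(\Fin^\amalg B\W_p)$ via the pullback $*\times_{*} *$ computed inside $\Fin^\amalg B\W_p$ should recover exactly the $C_p$-norm $\Nm_{C_p}\colon p_!X \to p_*X$, so that the composite $F\circ V$ equals the norm and yields the norm factorization defining a topological Cartier module.

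The main obstacle is making this span-composition identification precise. This is a double-coset-type calculation: the pullback of $p\colon B\W_p \to B\W_p$ against itself must be computed inside $\Fin^\amalg B\W_p$, using orbitality (which is exactly the content of \cite[Lemma 3.2.1]{McC21}). The cleanest route is to observe that $p\colon BS^1 \to BS^1$ is a principal $C_p$-bundle, so the Beck--Chevalley/ambidexterity package for the resulting square produces the $C_p$-norm automatically as the canonical map $p_! \to p_*$; this is essentially the content of \cite[Proposition 5.5]{AN18}. Once the compositional identification is in place, essential surjectivity and full faithfulness of the induced comparison functor follow by noting that both $\TCart_p$ and $\Mack(B\W_p)$ are freely generated as presentable stable $\infty$-categories over $\Sp^{BS^1}$ by the single datum of a norm factorization, so that matching the generators yields an equivalence of $\infty$-categories.
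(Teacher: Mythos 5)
The paper does not actually prove this proposition: it is imported wholesale, with the sentence preceding the statement deferring entirely to \cite[Proposition 5.5]{AN18} and \cite[Proposition 3.2.8]{McC21}. Your sketch unpacks what those references do and correctly isolates the two genuinely hard inputs --- orbitality of $B\W_p$ (so that $\Span(\Fin^\amalg B\W_p)$ is defined and spans compose via pullbacks) and the identification of the composite of the two generating spans with the $C_p$-norm via ambidexterity --- and you cite exactly the right sources for both. In that sense your route coincides with the paper's.

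Two caveats. First, a small imprecision: the pullback of the degree-$p$ leg against itself cannot be computed in spaces, where $BS^1\times_{Bp,BS^1,Bp}BS^1\simeq BS^1\times BC_p$ does not lie in $\Fin^\amalg B\W_p$; orbitality asserts that a pullback exists \emph{inside} the coproduct completion, and identifying it (and the $S^1$-twisting on its automorphisms, which is what turns ``$p$ copies of the identity'' into the norm $p_!\to p_*$) is the actual double-coset computation. Relatedly, $Bp\colon BS^1\to BS^1$ is not a principal $C_p$-bundle but a fibration with fibre $BC_p$; it is ambidexterity for this $\pi$-finite fibre that produces the norm. Second, and more seriously, your concluding step --- that both sides are ``freely generated as presentable stable $\infty$-categories over $\Sp^{BS^1}$ by the single datum of a norm factorization, so matching generators yields an equivalence'' --- is not an argument. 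The nontrivial content of \cite[Proposition 3.2.8]{McC21} is precisely that a product-preserving functor out of $\Span(\Fin^\amalg B\W_p)$ carries \emph{no more} data or coherences than an underlying $S^1$-spectrum together with a factorization of the norm; equivalently, that the mapping spectra of the span category are generated by the two elementary spans subject only to the norm identification. To finish rigorously you would either have to carry out that mapping-space analysis, or express both $\TCart_p$ (which is by definition a pullback of arrow/simplex categories over $\Sp^{BS^1}$) and $\Mack(B\W_p)$ as the same limit of presentable $\infty$-categories. As a blind sketch your proposal is sound in outline, but that final step is where the proof actually lives, and it is exactly the part the paper outsources to the cited references.
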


Forgetting the Verschiebung maps the functor $B\W_p^\op$ to $\Span\Fin^\amalg B\W_p$ induces a forgetful functor
\[\TCart_p\to \CycSp_p^{\Fr}:=\Fun(B\W_p^\op,\Sp)\]
where the right hand side is given by $p$-typical cyclotomic spectra with Frobenius lift, cf. also the series of work \cite{AFR18,AMR17b,AMR17}. Restricting further to the point in $B\W_p$ give conservative functors 
\[\TCart_p\to \CycSp_p^{\Fr}\to \Sp\]
both of which preserve limits and colimits. Indeed, in $\CycSp_p^\Fr$ limits and colimits are computed underlying and, thus, the right functor even creates limits and colimits, and the composite also preserves limits and colimits because localizing onto product preserving functors (i.e. Mackeyfication) does not change the value on the point.

In particular, the functor $\TCart_p\to \CycSp_p^\Fr$ admits a left adjoint $(-)[V]\colon\CycSp_p^\Fr\to\TCart_p$, which has been explicitly described in \cite[Lemma 4.1]{AN18}: Let $M\in\CycSp_p^\Fr$ with Frobenius $M\to p_*M$ then
\begin{equation*}\label{V-induction}M[V]\simeq\bigoplus_{n\in\N}p_!^nM\end{equation*}
with Verschiebung given on summands by $V\colon p_!(p_!^nM)\xrightarrow{\sim}p_!^{n+1}M$ and similarly Frobenius given by the Frobenius on $M$ in the bottom summand $M\rightarrow p_*M$ and for $n>0$ given by $F\colon p_!^nM\xrightarrow{Nm} p_*p_!^{n-1}M$.

\section{Symmetric Monoidal Structures}

In this section we want to describe the natural symmetric monoidal structure on $\TCart_p$ and prove that with this structure the functor $(-)[V]\colon \CycSp_p^\Fr\to\TCart_p$ previously described refines to a symmetric monoidal functor.

Let us first recollect some abstract results on symmetric monoidal structures on functor categories.

\begin{definition}
	Let $\C$ and $\D$ be categories and given a symmetric monoidal structure on $\D$.
	\begin{enumerate}
		\item We denote by $\D^\C$ the pointwise symmetric monoidal structure on $\Fun(\C,\D)$ (cf. \cite[Remark 2.1.3.4.]{Lur16} ). It is characterized by 
		\begin{equation}\label{pointwise}
		\Fun^\lax(\E,\D^\C)\simeq\Fun(\C,\Fun^\lax(\E,\D))
		\end{equation}
		for every symmetric monoidal $\infty$-category $\E$.
		\item If also $\C$ has a symmetric monoidal structure, we denote by $\Fun(\C,\D)^\otimes$ the Day convolution product on $\Fun(\C,\D)$ (cf. \cite[Example 2.2.6.9.]{Lur16}) satisfying the universal property for every symmetric monoidal $\E$:
		\begin{equation}\label{Day}
		\Fun^\lax(\E,\Fun(\C,\D)^\otimes)\simeq\Fun^\lax(\E\times\C,\D).
		\end{equation}
	\end{enumerate}
\end{definition}

Both symmetric monoidal structures behave well under left Kan extension and we have:

\begin{lemma}\label{left_Kan_symmetric_monoidal}
	Given a functor $f\colon\C\to\C'$ and a symmetric monoidal $\infty$-category $\D$ such that the tensor product preserves colimits in each variable. Denote by $f_!\colon\Fun(\C,\D)\to\Fun(\C',\D)$ the left Kan extension.
	\begin{enumerate}
		\item Then $f_!$ refines to a symmetric monoidal functor $\D^\C\to\D^{\C'}$
		\item If we assume that $f$ is a symmetric monoidal functor between symmetric monoidal categories, then $f_!$ can also be refined to a symmetric monoidal functor \[\Fun(\C,\D)^\otimes\to\Fun(\C',\D)^\otimes\]
	\end{enumerate}
\end{lemma}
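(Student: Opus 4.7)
My approach is unified for both parts: I would first equip the right adjoint $f^*\colon\D^{\C'}\to\D^{\C}$ with a symmetric monoidal structure (essentially tautological for the pointwise structure in part (1), and a consequence of $f$ being symmetric monoidal in part (2)), and then transfer it to the left adjoint $f_!$ via the adjunction $f_!\dashv f^*$. The hypothesis that the tensor product of $\D$ preserves colimits in each variable is used to guarantee that the induced structure on $f_!$ is strong rather than merely oplax.

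For part (1), the compatibility of $f^*$ with the pointwise tensor is essentially by definition, since $(f^*(H_1\otimes H_2))(c)=H_1(f(c))\otimes H_2(f(c))=(f^*H_1\otimes f^*H_2)(c)$. Formally, I would invoke the universal property \eqref{pointwise} with $\E=\D^{\C'}$ applied to the composite $\C\xrightarrow{f}\C'\to\Fun^\lax(\D^{\C'},\D)$ sending $c'$ to the strong symmetric monoidal evaluation functor at $c'$, thereby exhibiting $f^*$ as a symmetric monoidal functor. A standard doctrinal adjunction argument then endows $f_!$ with an induced monoidal refinement, and the colimit-preservation hypothesis on $\otimes$ in $\D$ allows one to verify the structure maps are equivalences by unpacking the pointwise colimit formula for the Kan extension.

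For part (2), I would instead exploit the universal property \eqref{Day}: the identity functor on $\Fun(\C,\D)^\otimes$ corresponds under \eqref{Day} to the evaluation pairing $\Fun(\C,\D)^\otimes\times\C\to\D$; precomposing the second factor with the symmetric monoidal functor $f\colon\C\to\C'$ and reapplying \eqref{Day} produces the required lax symmetric monoidal refinement of $f_!$. To see the refinement is strong, I would compute pointwise on $c'\in\C'$ using the colimit formula for the Kan extension:
\begin{align*}
(f_!F\otimes f_!G)(c') &\simeq\colimarg{c_1'\otimes c_2'\to c'}(f_!F)(c_1')\otimes(f_!G)(c_2') \\
&\simeq\colimarg{f(c_1)\otimes f(c_2)\to c'}F(c_1)\otimes G(c_2) \\
&\simeq\colimarg{f(c_1\otimes c_2)\to c'}(F\otimes G)(c_1\otimes c_2)\simeq f_!(F\otimes G)(c'),
\end{align*}
where the symmetric monoidality of $f$ is crucial for identifying $f(c_1)\otimes f(c_2)$ with $f(c_1\otimes c_2)$, and a Fubini/cofinality manipulation handles the outer colimit.

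The main obstacle is the $\infty$-categorical coherence bookkeeping required to promote these pointwise observations into genuine morphisms of $\infty$-operads; in practice one leans on Lurie's framework for Day convolution and operadic Kan extensions, but once the universal properties are correctly applied the remaining verifications reduce to the Fubini-type colimit identifications above.
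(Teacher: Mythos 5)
Your part (2) is correct: it is the standard pointwise-colimit-plus-cofinality argument for Day convolution, and it matches what the paper simply outsources to the cited reference [MS21, Theorem 3.6]. The only thing left implicit is the cofinality of the comparison of comma categories that collapses the iterated colimit in your second line, which you acknowledge.

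Part (1) contains a genuine gap. Doctrinal adjunction applied to the (strong, hence lax) symmetric monoidal structure on $f^*$ produces an \emph{oplax} structure on $f_!$, with structure maps $f_!(F\otimes G)\to f_!F\otimes f_!G$, and these are not equivalences in general, so your final step (``the colimit-preservation hypothesis \dots allows one to verify the structure maps are equivalences'') fails. Unwinding the pointwise formula, the map in question compares
\[
f_!(F\otimes G)(c')\simeq\colimarg{f/c'}\,F(c)\otimes G(c)
\qquad\text{with}\qquad
(f_!F\otimes f_!G)(c')\simeq\colimarg{f/c'\times f/c'}F(c_1)\otimes G(c_2),
\]
and it is induced by the diagonal $f/c'\to f/c'\times f/c'$, which is not cofinal. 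Concretely, take $\C=\pt$, $\C'=BG$ for a nontrivial finite group $G$, $\D=\Sp$, and both functors equal to $\S$: then $f_!(\S\otimes\S)\simeq\S[G]$ while $f_!\S\otimes f_!\S\simeq\S[G\times G]$ (with diagonal action), and these differ already on underlying spectra. So for the pointwise structure $f_!$ is in general only oplax (or, via a different construction, lax) symmetric monoidal, and no amount of bookkeeping will upgrade your argument to a strong refinement. Be aware that the paper's own one-line proof of part (1) also only produces a \emph{lax} structure --- it Kan-extends the evaluation functors $\C\to\Fun^\lax(\D^\C,\D)$ and invokes the universal property \eqref{pointwise}, which classifies lax functors --- so the strong statement requires either a weakening of the conclusion or additional hypotheses on $f$ guaranteeing cofinality of the diagonals above; your write-up should not assert the equivalence of the structure maps without such a hypothesis.
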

\begin{proof} By the universal property the lax structure on the functor $\D^\C\to\D^{\C'}$ can be obtained via left Kan extension $\C\to\Fun^\lax(\D^\C,\D)$ given by evaluation. 
		
The second claim is \cite[Theorem 3.6.]{MS21}. 
\end{proof}

In general the pointwise tensor product (forgetting possible symmetric monoidal structure on the source) and the Day convolution give different symmetric monoidal structures on the functor category. In some situations they agree. 

\begin{lemma}\label{cocartesian_Day_and_pointwise}
	Given two symmetric monoidal categories $\C$ and $\D$ and assume that $\C$ is cocartesian and the tensor product in $\D$ preserves colimits in each variable. Then the Day convolution and the pointwise symmetric monoidal structures on $\Fun(\C,\D)$ are naturally equivalent.
\end{lemma}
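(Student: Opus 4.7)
The plan is to verify that both symmetric monoidal structures represent the same functor $\CAlg(\Cat_\infty)^\op\to\Cat_\infty$ and then conclude by the $\infty$-categorical Yoneda lemma in $\CAlg(\Cat_\infty)$. Combining the universal properties \eqref{pointwise} and \eqref{Day}, it suffices to exhibit, for every symmetric monoidal $\infty$-category $\E$, a natural equivalence
\[ \Fun^\lax(\E \times \C, \D) \simeq \Fun(\C, \Fun^\lax(\E, \D)) \]
under the assumption that $\C$ is cocartesian.

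To build this key equivalence, I would construct mutually inverse comparison functors. Given a lax symmetric monoidal $F \colon \E \times \C \to \D$, the assignment $c \mapsto F(-,c)$ inherits a lax symmetric monoidal structure $\E \to \D$ whose structure maps are the composite $F(e_1, c) \otimes F(e_2, c) \to F(e_1 \otimes e_2, c \amalg c) \to F(e_1 \otimes e_2, c)$, the second arrow coming from the fold $\nabla_c \colon c \amalg c \to c$, which exists and is natural in $c$ precisely because $\C$ is cocartesian. Conversely, given $\phi \colon \C \to \Fun^\lax(\E, \D)$, the functor $F(e,c) := \phi_c(e)$ acquires a lax structure
\[ \phi_{c_1}(e_1) \otimes \phi_{c_2}(e_2) \to \phi_{c_1 \amalg c_2}(e_1) \otimes \phi_{c_1 \amalg c_2}(e_2) \to \phi_{c_1 \amalg c_2}(e_1 \otimes e_2), \]
where the first arrow uses the coproduct inclusions $\iota_j \colon c_j \to c_1 \amalg c_2$ and the second uses the lax structure of $\phi_{c_1 \amalg c_2}$. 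The universal property of finite coproducts makes these two constructions mutually inverse.

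Feeding the resulting equivalence back through \eqref{pointwise} and \eqref{Day} produces a natural equivalence $\Fun^\lax(\E, \D^\C) \simeq \Fun^\lax(\E, \Fun(\C,\D)^\otimes)$, and Yoneda in $\CAlg(\Cat_\infty)$ then yields $\D^\C \simeq \Fun(\C,\D)^\otimes$ as symmetric monoidal $\infty$-categories. The main obstacle is packaging the object-by-object prescriptions above into genuine maps of $\infty$-operads rather than pointwise recipes — that is, arranging all the higher coherences of the lax structures into a single $\infty$-functor out of $\E \times \C$ and its inverse. In practice this is bookkeeping in the operadic models of \cite{Lur16}, made formal by the observation that the cocartesian hypothesis reduces every coherence datum to the universal property of coproducts in $\C$.
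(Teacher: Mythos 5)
Your overall strategy---showing that both monoidal structures corepresent the same functor via their lax-functor universal properties and concluding with Yoneda---is exactly the paper's. The difference lies entirely in how the key equivalence $\Fun^\lax(\E\times\C,\D)\simeq\Fun(\C,\Fun^\lax(\E,\D))$ is obtained, and here your argument has a genuine gap. Specifying a lax symmetric monoidal structure ``by its structure maps'' (the fold map $c\amalg c\to c$ in one direction, the coproduct inclusions $c_j\to c_1\amalg c_2$ in the other) and then asserting that the universal property of coproducts makes the two constructions mutually inverse is a $1$-categorical argument. In the $\infty$-categorical setting the entire content of the statement is the coherence packaging you defer to ``bookkeeping'': there is no general mechanism for promoting an object-by-object prescription of structure maps to a map of $\infty$-operads, and one cannot verify that two such maps are inverse by checking on objects. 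As written, you have restated the needed result rather than proved it.

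The paper closes exactly this gap by citation: after one more application of the Day universal property \eqref{Day}, using the symmetry of $\E\times\C$, it rewrites $\Fun^\lax(\E\times\C,\D)$ as $\Fun^\lax(\C,\Fun(\E,\D)^\otimes)$ and then invokes \cite[Theorem 2.4.3.18]{Lur16}, which says precisely that for a cocartesian $\C$ lax symmetric monoidal functors out of $\C$ are the same as plain functors from $\C$ into commutative algebras, here $\CAlg(\Fun(\E,\D)^\otimes)\simeq\Fun^\lax(\E,\D)$. Your fold-map and inclusion formulas are the correct intuition for what that theorem accomplishes, so the repair is simply to replace the hand-construction with this reference (or to genuinely carry out the operadic construction, which is the content of Lurie's proof). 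Without one of these, the proposal does not constitute a complete proof.
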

\begin{proof}
	We use the universal properties of the Day convolution and the pointwise structure and we get:
	\begin{align*}
	\Fun^\lax(-,\Fun(\C,\D)^\otimes)\overset{\eqref{Day}}{\simeq}&\Fun^\lax(-\times \C,\D)\overset{\eqref{Day}}{\simeq}\Fun^\lax(\C,\Fun(-,\D)^\otimes)\\
	\simeq&\Fun(\C,\Fun^\lax(-,\D))\overset{\eqref{pointwise}}{\simeq}\Fun^\lax(-,\D^\C)
	\end{align*}
	where the first equivalence in the second line uses \cite[Theorem 2.4.3.18]{Lur16} stating that for a cocartesian operad $\C$ the map
	\[\Fun^\lax(\C,\E)\to \Fun(\CAlg(\C),\CAlg(\E))\simeq \Fun(\C,\CAlg(\E))\]
	is an equivalence.
\end{proof}

We can now describe the symmetric monoidal structure on our categories of interest:

\begin{construction}
	In the case of 
	\[\TCart_p\simeq \Mack(\Fin^\amalg(B\W_p))\] 
	the general theory of symmetric monoidal structures on spectral Mackey functors developed in \cite{BGS20} equips $p$-typical topological Cartier modules with a symmetric monoidal structure (cf. 3.8. in \loccit): 
	
	The category $\Fin^\amalg BW$ admits products by the cited result from \cite{McC21}, which inherit a symmetric monoidal structure to $\Span\Fin^\amalg(BW)$ making the inclusion $\Fin^\amalg(BW)\to\Span\Fin^\amalg(BW)$ symmetric monoidal. Then Day convolution on the functor category $\Fun(\Span\Fin^\amalg(BW),\Sp)$ localizes to the symmetric monoidal structure on Mackey functors as constructed in \cite{BGS20}. 
	We will denote this tensor product by
	\[-\boxtimes -:\TCart_p\times\TCart_p\to\TCart_p\]
	
	On the $\infty$-category $\CycSp_p^\Fr\simeq \Fun(B\W_p^\op,\Sp)$ we put the pointwise tensor product $\Sp^{B\W_p^\op}$.
\end{construction}

\begin{theorem}\label{symmetric_monoidal_structre}
	The functor $(-)[V]\colon\CycSp_p^\Fr\to\TCart_p$ constructed in \cite{AN18} refines to a symmetric monoidal functor with respect to the symmetric monoidal structures described above.
\end{theorem}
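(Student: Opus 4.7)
The plan is to identify $(-)[V]$ as a composition of functors, each of which can be promoted to a symmetric monoidal refinement by invoking the lemmas established earlier in this section.

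First, I would observe that the forgetful functor $\TCart_p \to \CycSp_p^\Fr$ is restriction along the inclusion $\iota\colon B\W_p^\op \hookrightarrow \Span(\Fin^\amalg B\W_p)$, so that its left adjoint $(-)[V]$ takes the form $L \circ \iota_!$, where $\iota_!$ denotes left Kan extension into $\Fun(\Span(\Fin^\amalg B\W_p),\Sp)$ and $L$ is the Mackeyification (localization onto product-preserving functors). The construction of the symmetric monoidal structure on $\TCart_p$ recalled from \cite{BGS20} realises $L$ as a symmetric monoidal localization of the Day convolution on $\Fun(\Span(\Fin^\amalg B\W_p),\Sp)$, so $L$ is automatically symmetric monoidal and it suffices to refine $\iota_!$.

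Next, I would factor $\iota = \beta \circ \alpha$ through $(\Fin^\amalg B\W_p)^\op$, where $\beta\colon (\Fin^\amalg B\W_p)^\op \hookrightarrow \Span(\Fin^\amalg B\W_p)$ is the ``transfer only'' inclusion. Since $\beta$ is a symmetric monoidal functor between symmetric monoidal categories (the cartesian structure on $(\Fin^\amalg B\W_p)^\op$ matching the structure on $\Span(\Fin^\amalg B\W_p)$ induced from coproducts in $\Fin^\amalg B\W_p$), Lemma~\ref{left_Kan_symmetric_monoidal}(2) refines $\beta_!$ to a symmetric monoidal functor with respect to the Day convolutions on source and target. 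For $\alpha_!\colon\Fun(B\W_p^\op,\Sp) \to \Fun((\Fin^\amalg B\W_p)^\op,\Sp)$, I would identify the Day convolution on the target with its pointwise structure via (an appropriate dual of) Lemma~\ref{cocartesian_Day_and_pointwise}, and then apply Lemma~\ref{left_Kan_symmetric_monoidal}(1) to refine $\alpha_!$ to a symmetric monoidal functor between the pointwise structures. Composing gives the desired refinement of $(-)[V] = L \circ \beta_! \circ \alpha_!$.

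The main obstacle is the compatibility step in the middle: Lemma~\ref{cocartesian_Day_and_pointwise} is stated for cocartesian sources, whereas $(\Fin^\amalg B\W_p)^\op$ carries a cartesian monoidal structure. I expect to handle this either by dualising the lemma via opposites, or, more directly, by appealing to the universal property \eqref{Day} of Day convolution together with the fact that the tensor on $(\Fin^\amalg B\W_p)^\op$ is computed from coproducts in $\Fin^\amalg B\W_p$: on product-preserving functors this reduces the Day tensor to the pointwise tensor. Once this identification is in hand, the rest of the argument is a purely formal chain of applications of Lemmas~\ref{left_Kan_symmetric_monoidal} and \ref{cocartesian_Day_and_pointwise}.
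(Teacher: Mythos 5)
Your factorization is exactly the paper's: split $(-)[V]$ into two left Kan extensions through $\Fun((\Fin^\amalg B\W_p)^\op,\Sp)$ followed by the Mackeyification, handle the localization via \cite{BGS20}, use Lemma~\ref{left_Kan_symmetric_monoidal}(1) for the first Kan extension and Lemma~\ref{left_Kan_symmetric_monoidal}(2) for the second, and bridge the pointwise and Day structures on the intermediate category via Lemma~\ref{cocartesian_Day_and_pointwise}. So the architecture of the argument is correct and matches the paper.

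The one place you go astray is in identifying the relevant monoidal structures, and this is what manufactures your ``main obstacle''. The symmetric monoidal structure on $\Span(\Fin^\amalg B\W_p)$ used for the Mackey-functor tensor product is the one induced by \emph{products} in $\Fin^\amalg B\W_p$, not by coproducts (the coproduct is the biproduct of the span category and its Day convolution would give the levelwise direct-sum structure, not $\boxtimes$). A product in $\C$ is a coproduct in $\C^\op$, so $(\Fin^\amalg B\W_p)^\op$ with this structure is \emph{cocartesian}, not cartesian, and Lemma~\ref{cocartesian_Day_and_pointwise} applies verbatim to identify the Day convolution on $\Fun((\Fin^\amalg B\W_p)^\op,\Sp)$ with the pointwise tensor. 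The obstacle you flag therefore dissolves; no dualization is needed. It is worth stressing that your proposed fallbacks would not rescue the argument if the structure really were cartesian: Day convolution over a cartesian source does \emph{not} agree with the pointwise tensor (the relevant comma categories have no terminal object), and the reduction ``on product-preserving functors'' is unavailable at the intermediate stage, since Mackeyification is only performed at the last step of the factorization. So you should fix the identification of the monoidal structure rather than attempt to dualize the lemma.
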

\begin{proof}
	The functor $(-)[V]\colon\CycSp_p^\Fr\to\TCart_p$ is left adjoint to the forgetful functor coming from the restriction $B\W^\op\to\Span(\Fin^\amalg B\W_p)$. In particular, this left adjoint can be written as the following factorization
	\begin{align*}
	\CycSp_p^\Fr\simeq \Fun(B\W_p^\op,\Sp)\xrightarrow{\Lan}&\Fun(\Fin^\amalg(B\W_p)^\op,\Sp)\xrightarrow{\Lan}\Fun(\Span\Fin^\amalg(B\W_p),\Sp)\\
	\xrightarrow M&\Mack(B\W_p)\simeq\TCart_p
	\end{align*}
	where $\Lan$ are the respective left Kan extension and $M$ is the localization onto Mackey functors. The latter carries a symmetric monoidal structure by \cite[Lemma 3.7.]{BGS20}. Note that the product on $\Fin^\amalg(B\W)$ gives the opposite category $\Fin^\amalg(B\W)^\op$ the cocartesian symmetric monoidal structure, such that by Lemma \ref{cocartesian_Day_and_pointwise} the pointwise and Day convolution product on $\Fun(\Fin^\amalg(B\W)^\op,\Sp)$ agree. Thus the left Kan extensions lift to symmetric monoidal functors by Lemma \ref{left_Kan_symmetric_monoidal} 1) in the first case and 2) in the second case.
\end{proof}

\begin{corollary}\label{lax_monoidal}
	The symmetric monoidal structure on $\TCart_p$ from Theorem \ref{symmetric_monoidal_structre} induces a lax symmetric monoidal structure on the forgetful functor $\TCart_p\to\Sp$.
\end{corollary}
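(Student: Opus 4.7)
The plan is to exhibit the forgetful functor $\TCart_p\to\Sp$ as a composition of two lax symmetric monoidal functors, using the factorization
\[
\TCart_p \longrightarrow \CycSp_p^\Fr \longrightarrow \Sp
\]
already discussed earlier in the paper, where the first arrow is the forgetful functor restricting from Mackey functors on $\Fin^\amalg B\W_p$ to functors on $B\W_p^\op$, and the second arrow is evaluation at the point of $B\W_p$.

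First I would observe that the left-hand functor $\TCart_p\to \CycSp_p^\Fr$ is right adjoint to the functor $(-)[V]\colon \CycSp_p^\Fr \to \TCart_p$. By Theorem~\ref{symmetric_monoidal_structre}, $(-)[V]$ is symmetric monoidal with respect to the pointwise structure on $\CycSp_p^\Fr$ and the structure $\boxtimes$ on $\TCart_p$. It is a general fact (see \cite[Corollary 7.3.2.7]{Lur16}) that the right adjoint of a symmetric monoidal functor between symmetric monoidal $\infty$-categories inherits a canonical lax symmetric monoidal refinement. This yields the lax symmetric monoidal structure on $\TCart_p\to \CycSp_p^\Fr$.

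Next, since $\CycSp_p^\Fr\simeq \Fun(B\W_p^\op,\Sp)$ is equipped with the pointwise symmetric monoidal structure $\Sp^{B\W_p^\op}$, the universal property \eqref{pointwise} (with $\E=\Sp$) implies that evaluation at any object of $B\W_p^\op$ refines to a (strong, hence lax) symmetric monoidal functor $\CycSp_p^\Fr\to\Sp$. Taking the evaluation at the point gives the second arrow.

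Finally, composing the lax symmetric monoidal structure from the adjunction with the strong symmetric monoidal structure from evaluation produces the desired lax symmetric monoidal refinement of $\TCart_p\to\Sp$. There is no serious obstacle here; the only point that requires care is checking that the composite is indeed the usual underlying spectrum functor, which is exactly the factorization recorded after Proposition~\ref{V,F,d-structure_on_homotopy_groups}.
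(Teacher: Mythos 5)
Your proposal is correct and follows essentially the same route as the paper: factor the forgetful functor through $\CycSp_p^\Fr$, observe that evaluation at the point is symmetric monoidal for the pointwise structure, and endow $\TCart_p\to\CycSp_p^\Fr$ with the lax symmetric monoidal structure it inherits as the right adjoint of the symmetric monoidal functor $(-)[V]$ from Theorem \ref{symmetric_monoidal_structre}. The paper's own proof is a condensed version of exactly this argument.
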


\begin{proof}
	The forgetful functor $\CycSp^\Fr_p\to\Sp$ is symmetric monoidal, so it satisfies to argue that $\TCart_p\to\CycSp^\Fr_p$ is lax symmetric monoidal, but this comes from the symmetric monoidal structure of its left adjoint.
\end{proof}

In \cite[Section 4]{AN18} many results can be directly imported to a non-completed tensor product on topological Cartier modules. In particular, we can recover the analogue of Corollary 4.4 in \loccit:

\begin{corollary}
	Given $M_1,M_2\in\TCart_p$ the tensor product $M_1\boxtimes M_2$ is equivalent to the total cofibre of a square
	\[\begin{tikzcd}
	\big(p_!M_1\otimes p_!M_2\big)[V]\ar[r]\ar[d]&
	\big(p_!M_1\otimes M_2\big)[V]\ar[d]\\
	\big(M_1\otimes p_!M_2\big)[V]\ar[r]& (M_1\otimes M_2)[V]
	\end{tikzcd}\]
	where the maps are induced by the map $p_!M_i\xrightarrow{V-\incl} M_i[V]\simeq \bigoplus_n
	p_!^nM_i$ of cyclotomic spectra with Frobenius lifts with the former being equipped with the 0-Frobenius lift.
\end{corollary}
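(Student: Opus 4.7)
The plan is to establish a two-term cofibre presentation of every $M \in \TCart_p$ by modules of the form $N[V]$, and then invoke the symmetric monoidality of $(-)[V]$ established in Theorem \ref{symmetric_monoidal_structre} to reduce $M_1 \boxtimes M_2$ to tensor products in $\CycSp_p^\Fr$.

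First, I would show that for every $M \in \TCart_p$, there is a cofibre sequence
\[
p_!M\,[V] \xrightarrow{V-\incl} M[V] \longrightarrow M
\]
in $\TCart_p$, where the right arrow is the counit of the adjunction $(-)[V]\dashv\fgt$ and the left arrow is induced by a map $p_!M \to M[V]$ in $\CycSp_p^\Fr$ (with $p_!M$ equipped with the 0-Frobenius) comparing the Verschiebung in the Cartier module structure of $M$ with the free Verschiebung built into $M[V]$. To verify this is a cofibre sequence, I would apply the conservative, limit- and colimit-preserving functor $\TCart_p \to \Sp$. On underlying spectra the sequence becomes a telescope of Verschiebungs identifying the cofibre of $\bigoplus_n p_!^{n+1}M \to \bigoplus_n p_!^n M$ with $M$.

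Next, applying the above resolution to both $M_1$ and $M_2$ and using that $\boxtimes$ preserves colimits in each variable (since $\TCart_p$ is presentably symmetric monoidal by construction), the tensor product $M_1 \boxtimes M_2$ is computed as the total cofibre of
\[
\begin{tikzcd}
p_!M_1[V] \boxtimes p_!M_2[V] \ar[r] \ar[d] & M_1[V] \boxtimes p_!M_2[V] \ar[d] \\
p_!M_1[V] \boxtimes M_2[V] \ar[r] & M_1[V] \boxtimes M_2[V]
\end{tikzcd}
\]
Finally, Theorem \ref{symmetric_monoidal_structre} identifies each corner via the symmetric monoidal equivalence $A[V] \boxtimes B[V] \simeq (A \otimes B)[V]$, where $\otimes$ denotes the tensor product on $\CycSp_p^\Fr$, yielding the claimed square.

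The main obstacle is the first step: constructing and verifying the two-term resolution $p_!M[V] \to M[V] \to M$. The map $V - \incl$ must be defined at the level of $\CycSp_p^\Fr$ — the 0-Frobenius choice on $p_!M$ is precisely what is needed to make the relevant squares commute — and one has to check that the cofibre is exactly $M$ rather than some larger iterated bar construction. This uses that the monad $T = \fgt \circ (-)[V]$ has a \emph{free} flavor ($TN \simeq \bigoplus_n p_!^n N$), so its bar resolution collapses to two terms; once this map is in place, the conservativity of the forgetful functor to $\Sp$ reduces everything to a standard telescope identification.
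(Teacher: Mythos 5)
Your proposal follows the same route as the paper: the two-term cofibre presentation $p_!M_i[V]\xrightarrow{V-\incl}M_i[V]\to M_i$, bilinearity of $\boxtimes$ with respect to colimits, and the symmetric monoidality of $(-)[V]$ from Theorem \ref{symmetric_monoidal_structre} to identify the corners. Your added verification of the cofibre sequence via the conservative forgetful functor to $\Sp$ and the telescope identification is a correct elaboration of what the paper leaves implicit (citing \cite[Corollary 4.4]{AN18}).
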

\begin{proof}
	This is basically \cite[Corollary 4.4.]{AN18}. It follows from the cofibre sequence 
	\begin{equation}\label{totcof-tensor-formula}
	p_!M_i[V]\xrightarrow{V-\incl}M_i[V]\to M_i
	\end{equation}
	together with the fact that the tensor product $(-)\boxtimes(-)$ preserves colimits in both arguments separately and that $(-)[V]$ is symmetric monoidal by Theorem \ref{symmetric_monoidal_structre}.
\end{proof}

%\begin{remark}
%	The tensor product of $\TCart_p$ induces a completed tensor product on the $V$-complete category $\widehat{\TCart_p}$. This agrees with the tensor product studied in \cite{AN18} Section 4 transported from $\CycSp_p$ via $\TR$.
%\end{remark}

We will not use this explicit description of the tensor product of topological Cartier modules, but let us add the following remark for completeness reasons:
\begin{remark}
	One can explicitly identify the maps in \eqref{totcof-tensor-formula}, and this gives us an explicit description of bilinear maps in $\TCart_p$. Given $M_1,M_2, N\in\TCart_p$ a map $M_1\boxtimes M_2\to N$ is given by the following datum:
	\begin{enumerate}
		\item A morphism $f\colon M_1\otimes M_2\to N$ of underlying cyclotomic spectra with Frobenius lift\\
		\item Homotopies filling the diagrams
		\[\begin{tikzcd}
		M_1\otimes p_!M_2\ar[r,"{\id\otimes V}"]\ar[d,"{F\otimes\id}"]&M_1\otimes M_2\ar[dddd,"f"]\ar[ldddd,Rightarrow, shorten <= 2em, shorten >= 4em, shift left = 4,"H_1"]\\
		p_*M_1\otimes p_!M_2\ar[d, "\sim" {rotate = 90, anchor = south}]\\
		p_!(p^*p_*M_1\otimes M_2)\ar[d,"{p_!(\eta\otimes id)}"]\\
		p_!(M_1\otimes M_2)\ar[d,"{p_!f}"]\\
		p_!N\ar[r,"V"]&N
		\end{tikzcd}
		\begin{tikzcd}
		p_!M_1\otimes M_2\ar[r,"{V\otimes \id}"]\ar[d,"{\id\otimes F}"]&M_1\otimes M_2\ar[dddd,"f"]\ar[ldddd,Rightarrow, shorten <= 2em, shorten >= 4em, shift left = 4, "H_2"]\\
		p_!M_1\otimes p_*M_2\ar[d, "\sim" {rotate = 90, anchor = south}]\\
		p_!(M_1\otimes p^*p_*M_2)\ar[d,"{p_!(id\otimes \eta)}"]\\
		p_!(M_1\otimes M_2)\ar[d,"{p_!f}"]\\
		p_!N\ar[r,"V"]&N
		\end{tikzcd}\]
		\item A 2-cell witnessing the equivalence
		\[H_1\circ (V\otimes \id)\simeq H_2\circ (\id\otimes V)\]
		as homotopies between functors $p_!M_1\otimes p_!M_2\to N$.
	\end{enumerate}
\end{remark}

\section{The compact Generator of \texorpdfstring{$\TCart_p$}{TCartp}}\label{Main}

In section \ref{Topological_Cartier_Modules} we have already encountered the forgetful functor $\TCart_p\to\Sp$ and seen that it preserves limits, colimits and is conservative. Let $\R:\Sp\to\TCart_p$ denote a left adjoint. The object $\R(\S)$ is a compact generator of $\TCart_p$, cf. \cite{AN18}[Remark 3.13]. More generally, if $A\in\Alg(\TCart_p)$ is an algebra object, the $\infty$-category $\LMod_A(\TCart_p)$ is compactly generated by $\R(\S)\boxtimes A$.

\begin{theorem}\label{additive_Cartier--Raynaud}
	The underlying spectrum of the compact generator $\R(\S)$ of $\TCart_p$ is given by 
	\[	\R(\S)\simeq\bigoplus_{i\in\Z}(\S[V]\oplus\Sigma\S[V]).\] 
	Moreover, if $A$ is an algebra in $\TCart$, we have the functorial description $\R(\S)\boxtimes A\simeq\bigoplus_{i\in\Z}(A\oplus \Sigma A)$ of the compact generator of $\LMod_A(\TCart_p)$.  
	In particular, if $A$ is $n$-truncated $\R(\S)\boxtimes A$ is $(n+1)$-truncated.  
\end{theorem}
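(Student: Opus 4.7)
The plan is to compute $\R(\S)$ via the factorization $\R \simeq (-)[V] \circ L_0$, where $L_0 \colon \Sp \to \CycSp^\Fr_p$ is the left adjoint of the underlying-spectrum functor. Under the identification $\CycSp^\Fr_p \simeq \Fun(B\W_p^\op, \Sp)$, the functor $L_0$ is left Kan extension along $\{*\} \hookrightarrow B\W_p^\op$; it decomposes further as the free $S^1$-spectrum $\S \mapsto \S[S^1] = \Sigma^\infty_+ S^1$ followed by the free Frobenius lift $X \mapsto \bigoplus_{n \in \N} p^{*n} X$ with shift Frobenius, giving $L_0(\S) \simeq \bigoplus_{n \in \N} p^{*n}\S[S^1]$ in $\CycSp^\Fr_p$.

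Next I would apply the explicit formula $M[V] \simeq \bigoplus_m p_!^m M$ together with colimit-preservation of $p_!^m$ to obtain
\[\R(\S) \simeq \bigoplus_{(m,n) \in \N^2} p_!^m p^{*n}\S[S^1].\]
Each summand simplifies by iterating the projection formula $p_!(p^* X \otimes Y) \simeq X \otimes p_! Y$ in $\Sp^{BS^1}$. Combined with $p_!\S[S^1] \simeq \S[S^1]$ (the restricted $C_p$-action on $\Sigma^\infty_+ S^1$ is free, giving $(\Sigma^\infty_+ S^1)_{hC_p} \simeq \Sigma^\infty_+(S^1/C_p) \simeq \Sigma^\infty_+ S^1$) and the trivial $S^1$-action on $p_!^k \S$, both cases $m \leq n$ and $m \geq n$ reduce on underlying spectra to $\Sigma^\infty_+ S^1 \otimes p_!^{\min(m,n)}\S$.

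The key re-indexing is the bijection $\N \times \N \leftrightarrow \N \times \Z$ given by $(m,n) \mapsto (\min(m,n),\, m-n)$. Using $\S[V] \simeq \bigoplus_k p_!^k \S$ on underlying spectra together with $\S[S^1] \simeq \S \oplus \Sigma\S$, this yields
\[\R(\S) \simeq \bigoplus_{i \in \Z} \S[S^1] \otimes \S[V] \simeq \bigoplus_{i \in \Z}\big(\S[V] \oplus \Sigma\S[V]\big),\]
proving the first claim. For the $A$-version, recall by Theorem~\ref{symmetric_monoidal_structre} that $(-)[V]$ is symmetric monoidal, so $\S[V]$ is the unit of $\TCart_p$; hence $\S[V]\boxtimes A \simeq A$ and $\Sigma\S[V]\boxtimes A \simeq \Sigma A$. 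Provided the above decomposition of $\R(\S)$ is lifted from an equivalence of underlying spectra to one in $\TCart_p$, tensoring with $A$ gives $\R(\S) \boxtimes A \simeq \bigoplus_{i \in \Z}(A \oplus \Sigma A)$ in $\LMod_A(\TCart_p)$. The truncation claim is then automatic: $\Sigma A$ is $(n+1)$-truncated when $A$ is $n$-truncated, direct sums preserve truncation for spectra, and truncation in $\LMod_A(\TCart_p)$ is detected on underlying spectra since the forgetful functor is conservative and limit-preserving.

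The main obstacle is the careful bookkeeping of the $S^1$-action through the iterated projection formula, in particular the interplay between the $p^n$-twisting on summands of $L_0(\S)$ and the $p_!^m$ operations. Lifting the underlying-spectrum decomposition of $\R(\S)$ to a $\TCart_p$-level equivalence, needed for the functorial $A$-statement, is a further subtlety; this can be handled by exhibiting maps $\S[V] \to \R(\S)$ corresponding to the $\Z$-many generators $v^i$ (iterated Verschiebung for $i \geq 0$, iterated Frobenius for $i < 0$) of the underlying spectrum and verifying that they assemble into the required equivalence.
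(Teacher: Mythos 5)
Your proposal is correct and follows essentially the same route as the paper: factor $\R$ through $\S\mapsto\S[\W_p]\simeq\bigoplus_m(p^*)^m\S[S^1]$ and $(-)[V]\simeq\bigoplus_n p_!^n$, simplify each bidegree $(m,n)$ with the projection formula and $p_!\S[S^1]\simeq\S[S^1]$, and reindex by $(\min(m,n),m-n)$ to split off a $\S[V]$-factor. The only (minor) difference is that you flag and sketch how to promote the underlying-spectrum splitting to one in $\TCart_p$ before applying $-\boxtimes A$, a point the paper dispatches by simply reducing to the unit $A=\S[V]$.
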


\begin{proof}
	The statement about the compact generator of $\LMod_A(\TCart_p)$ follows immediately from the statement for $A=\S[V]$, the unit in $\TCart_p$. We can factor the left adjoint $\R$ into the two left adjoints
	\[\Sp\xrightarrow{-\otimes\S[\W_p]}\CycSp_p^\Fr\xrightarrow{(-)[V]}\TCart_p\]
	as they are all adjoint to the forgetful functor. The first functor is, indeed, as stated because on $\CycSp_p^\Fr\simeq\Fun(B\W_p^\op,\Sp)$ the left adjoint to the forgetful functor is given by tensoring with the induction on $\S$. In \cite[Example 2.2.9.]{McC21} the object $\S[\W_p]$ has already been computed, and we have
	\[\S[\W_p]\simeq\bigoplus_{m\geq 0}\S[S^1/C_{p^m}]\simeq\bigoplus_{m\geq 0}(p^*)^m\S[S^1]\]
	with Frobenius lifts induced summandwise by the counit of the $(p^*\dashv p_*)$-adjunction, explicitly $(p^*)^m \S[S^1]\to p_*(p^*)^{m+1}\S[S^1]$.
	Thus, by expanding and reordering the terms using the projection formula $(S^1/C_p)^{hC_p}\simeq S^1\times BC_p$ we can compute
	\begin{align*}
	\R(\S)&\simeq\S[\W_p][V]\simeq\bigoplus_{n,m\geq 0}p_!^n(p^*)^m\S[S^1]\\
	&\simeq\left(\bigoplus_{n\geq m\geq 0}p_!^{n-m}\S[S^1]\otimes p_!^m\S\right)\oplus\left(\bigoplus_{0\leq n<m}(p^*)^{m-n}\S[S^1]\otimes p_!^n\S\right)\\
	&\simeq\left(\bigoplus_{i=n-m\geq0}p_!^i\S[S^1]\oplus\bigoplus_{j=m-n>0}(p^*)^{j}\S[S^1]\right)\otimes\bigoplus_{k\geq 0}p_!^k\S\simeq \lbar\R\otimes \S[V]
	\end{align*}
	where we set $\lbar\R:=\bigoplus_{i\geq 0}p_!^i\S[S^1]\oplus\bigoplus_{j>0}(p^*)^j\S[S^1]$. But here the underlying spectrum of $ p_!\S[S^1]$ and $p^*\S[S^1]$ is given by $\S\oplus\Sigma \S$. Thus, $\lbar\R\simeq \bigoplus_{i\in\Z}\S\oplus\Sigma\S$ and the claim follows. 
\end{proof}	

	Because $\R(\S)\boxtimes A$ only consists of $A$ and $\Sigma A$ as summands, we can immediately deduce the following statement on truncatedness:

	\begin{corollary}\label{truncated}
		If $A\in\Alg(\TCart_p)$ is $n$-truncated, then the compact generator of $\LMod_A(\TCart_p)$ is $(n+1)$-truncated.		
	\end{corollary}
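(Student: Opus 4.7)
The plan is to read off the statement directly from Theorem \ref{additive_Cartier--Raynaud}. Truncatedness of an object in $\LMod_A(\TCart_p)$ is detected on the underlying spectrum: the composite $\LMod_A(\TCart_p)\to\TCart_p\to\Sp$ is conservative and preserves limits, since the forgetful functor $\TCart_p\to\Sp$ was noted in Section~\ref{Topological_Cartier_Modules} to be conservative and preserve both limits and colimits, and the further forgetful functor from modules has the analogous properties. So it suffices to show that the underlying spectrum of $\R(\S)\boxtimes A$ is $(n+1)$-truncated.

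By Theorem \ref{additive_Cartier--Raynaud}, this underlying spectrum is $\bigoplus_{i\in\Z}(A\oplus\Sigma A)$. If $A$ is $n$-truncated then $\Sigma A$ is $(n+1)$-truncated, and $A$ itself is a fortiori $(n+1)$-truncated. Since $\pi_k$ of spectra commutes with arbitrary direct sums, the homotopy groups of $\bigoplus_{i\in\Z}(A\oplus\Sigma A)$ vanish for $k>n+1$, which yields the claim.

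There is no real obstacle here: once the explicit decomposition of the compact generator is available, the corollary is pure bookkeeping. The only minor point is to fix the convention that $n$-truncated in $\Sp$ means $\pi_k=0$ for $k>n$, and to record that this property is transferred from $\LMod_A(\TCart_p)$ to $\Sp$ (and back) via the conservative, limit-preserving forgetful functor.
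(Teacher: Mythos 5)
Your proof is correct and takes essentially the same route as the paper: the paper deduces the corollary immediately from the decomposition $\R(\S)\boxtimes A\simeq\bigoplus_{i\in\Z}(A\oplus\Sigma A)$ of Theorem \ref{additive_Cartier--Raynaud}, noting that the summands are just $A$ and $\Sigma A$. Your additional remarks --- that truncatedness is detected on the underlying spectrum via the conservative, limit-preserving forgetful functor, and that homotopy groups commute with direct sums --- simply make explicit the bookkeeping the paper leaves implicit.
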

	
	The object $\R(\S)$ is a commutative algebra in $\TCart_p$ with respect to $\boxtimes$, however, it is not the unit. Moreover, the underlying spectrum of $\R(\S)$ carries another $\mathbb{E}_1$-structure. It is this $\mathbb{E}_1$-structure that we want to describe in this paper.

	\begin{definition}\label{definition_Cartier_Raynaud}
		For an algebra $A\in\Alg(\TCart_p)$ we define the topological Cartier--Raynaud ring over $A$ as
		\[\R_A:=\mathrm{end}_{\LMod_A(\TCart_p)}(\R(\S)\boxtimes A)^\op,\]
		i.e. the endomorphism spectrum of $\R(\S)\boxtimes A$ in $\LMod_A(\TCart_p)$ acquires an $\mathbb{E}_1$-algebra structure by composition and we equip $\R_A$ with the opposite ring structure. 
	\end{definition}

	\begin{remark}
		The reason for the $(-)^\op$ lies in the fact, that given an algebra $A$ in spectra, $A$-linear endomorphisms of $A$ as a left $A$-module naturally acquire a right $A$-module structure. In particular, the identification as rings naturally has the form 
		\[\mathrm{end}_{\LMod_A(\Sp)}(A)^\op\simeq A.\]
	\end{remark}

\begin{proposition}\label{Tcart_as_modules}
	For $A\in\CAlg(\TCart_p)$ there is an equivalence
	\[\LMod_A(\TCart_p)\simeq \LMod_{\R_A}(\Sp)\]
\end{proposition}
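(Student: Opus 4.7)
The proof is a direct application of the Schwede--Shipley / Morita recognition theorem for presentable stable $\infty$-categories, in the $\infty$-categorical form of \cite{Lur16} (Theorem 7.1.2.1): if $\D$ is a presentable stable $\infty$-category with a compact generator $X$, then the functor $\Map_\D(X,-)$ induces an equivalence $\D\simeq \LMod_{\End_\D(X)^\op}(\Sp)$, where $\End_\D(X)$ is the endomorphism ring spectrum computed with its natural composition $\mathbb{E}_1$-structure. The plan is to feed $\D=\LMod_A(\TCart_p)$ and $X=\R(\S)\boxtimes A$ into this machine and read off the equivalence.

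First I would check the hypotheses. The $\infty$-category $\TCart_p\simeq \Mack(B\W_p)$ is presentable and stable, being a localization of a functor category from a small $\infty$-category to $\Sp$. Since $A\in\CAlg(\TCart_p)$, the $\infty$-category $\LMod_A(\TCart_p)$ is also presentable stable (Higher Algebra, 4.2.3.7 and 4.5.1.1). For the compact generator, the discussion at the start of Section \ref{Main} shows that the forgetful functor $\TCart_p\to\Sp$ preserves all limits and colimits and is conservative, so its left adjoint $\R$ sends compact generators to compact generators; in particular $\R(\S)$ is a compact generator of $\TCart_p$. Extending scalars along $\unit\to A$, the $A$-module $\R(\S)\boxtimes A$ is then a compact generator of $\LMod_A(\TCart_p)$ by the standard argument: $\Map_{\LMod_A}(\R(\S)\boxtimes A, M)\simeq \Map_{\TCart_p}(\R(\S),M)\simeq \Map_{\Sp}(\S, M)$, which is conservative and preserves filtered colimits in $M$.

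Applying Schwede--Shipley then yields
\[
\LMod_A(\TCart_p)\simeq \LMod_{\End_{\LMod_A(\TCart_p)}(\R(\S)\boxtimes A)^\op}(\Sp) = \LMod_{\R_A}(\Sp),
\]
where the last equality is the very definition of $\R_A$ in Definition \ref{definition_Cartier_Raynaud}. The equivalence is implemented by $\Map_{\LMod_A(\TCart_p)}(\R(\S)\boxtimes A, -)$, whose underlying spectrum is the forgetful functor $\LMod_A(\TCart_p)\to \Sp$ composed with the identification $\R(\S)\boxtimes A\simeq \R(\unit_\Sp\otimes A)$ coming from the adjunction $(\R\dashv \fgt)$.

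There is no serious obstacle: the argument is essentially formal once one has identified the compact generator, which was the content of Theorem \ref{additive_Cartier--Raynaud}. The only mildly delicate point is keeping track of the ``op'' in $\R_A=\End(\R(\S)\boxtimes A)^\op$, which is precisely what converts the natural right $\End$-module structure on the generator into the left module structure used on the right-hand side of the equivalence, and which is the reason this version of the statement uses $\LMod$ on both sides.
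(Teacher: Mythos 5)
Your proposal is correct and follows exactly the paper's argument: identify $\R(\S)\boxtimes A$ as a compact generator of the presentable stable $\infty$-category $\LMod_A(\TCart_p)$ and invoke the Schwede--Shipley theorem in the form of \cite[Theorem 7.1.2.1]{Lur16}, with $\R_A$ defined as the (opposite of the) endomorphism ring of that generator. The paper's proof is a one-liner citing the same two ingredients; your version merely spells out the hypothesis checks.
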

\begin{proof}
	The $\infty$-category $\LMod_A(\TCart_p)$ is compactly generated by $\R(\S)\boxtimes A$ and the stated equivalence follows from the Schwede-Shipley theorem proved in this language in \cite[Theorem 7.1.2.1.]{Lur16}. 
\end{proof}

In fact, this equivalence does not affect the underlying spectrum, thus, $\R_A$ controls the homotopy group functor $\pi_*\colon\TCart_p\to\gr\Ab$ and we get the following corollary from the Yoneda lemma:

\begin{corollary}\label{homotopy_operations}
	For $A\in\CAlg(\TCart_p)$ the graded ring of natural endomorphisms of the homotopy groups functor $\pi_*\colon\LMod_A(\TCart_p)\to\gr\Ab$ is given by $\pi_*\R_A$.
\end{corollary}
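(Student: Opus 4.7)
The plan is to use Proposition \ref{Tcart_as_modules} to transport the question across the equivalence $\LMod_A(\TCart_p)\simeq\LMod_{\R_A}(\Sp)$. Since this equivalence is induced by the functor $\Map_{\LMod_A\TCart_p}(\R(\S)\boxtimes A,-)$ and the underlying spectrum of $\R(\S)\boxtimes A$ in $\TCart_p$ is (after forgetting) the spectrum underlying the equivalence, the homotopy group functor $\pi_*\colon\LMod_A\TCart_p\to\gr\Ab$ becomes $\pi_*$ of underlying spectra on the right-hand side. Thus the corepresented functor $\Map_{\LMod_{\R_A}}(\R_A,-)\simeq (-)$ recovers $\pi_n$ as $\pi_n M\cong\pi_0\Map_{\LMod_{\R_A}}(\Sigma^n\R_A,M)$, so $\pi_n$ is corepresented in the homotopy category by $\Sigma^n\R_A$.

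Next, I would apply the (enriched) Yoneda lemma to the homotopy category, which says that for any $n,m\in\Z$,
\[ \Nat\bigl(\pi_n,\pi_m\bigr)\cong \pi_0\Map_{\LMod_{\R_A}}(\Sigma^m\R_A,\Sigma^n\R_A)\cong \pi_{n-m}\R_A. \]
Assembling these identifications over all $n,m$ identifies the collection $\bigoplus_d\Nat^d(\pi_*,\pi_*)$ of graded natural transformations with $\pi_*\R_A$ as a graded abelian group.

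Finally I would check that composition of natural transformations matches the multiplication on $\pi_*\R_A$. Under Yoneda, a natural transformation $\alpha$ corresponding to $a\colon \R_A\to\R_A$ sends $f\in\Map(\R_A,M)$ to $f\circ a$; hence $\alpha\circ\beta$ corresponds to $b\circ a$, i.e.\ to the opposite composition in $\End(\R(\S)\boxtimes A)$. By Definition \ref{definition_Cartier_Raynaud}, $\R_A=\mathrm{end}(\R(\S)\boxtimes A)^\op$, so this opposite precisely gives the ring structure on $\R_A$, and hence on $\pi_*\R_A$.

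The main obstacle is really only bookkeeping: one must be careful about the opposite ring convention in Definition \ref{definition_Cartier_Raynaud} and about the direction of Yoneda for corepresented (rather than represented) functors, so that the graded-ring identification is with $\pi_*\R_A$ and not $\pi_*\R_A^\op$. All the substantial content, compact generation by $\R(\S)\boxtimes A$ and the Schwede–Shipley identification, has already been established.
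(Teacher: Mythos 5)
Your argument is essentially identical to the paper's proof: both pass through the Schwede--Shipley equivalence of Proposition \ref{Tcart_as_modules}, corepresent $\pi_n$ by $\Sigma^n\R_A$ in the $\Ab$-enriched homotopy category, apply the (co)Yoneda lemma, and let the contravariance of Yoneda cancel against the $(-)^\op$ in Definition \ref{definition_Cartier_Raynaud}. The only flaw is a harmless index slip: $\pi_0\Map_{\LMod_{\R_A}}(\Sigma^m\R_A,\Sigma^n\R_A)\cong\pi_{m-n}\R_A$, not $\pi_{n-m}\R_A$, since a natural transformation raising degree by $m-n$ should correspond to an element of that degree.
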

\begin{proof}
We can identify $\TCart_p\simeq \LMod_{\R_A}(\Sp)$ and let $h\LMod_{\R_A}(\Sp)$ denote its $\Ab$-enriched homotopy category. Now $\Sigma^k\R_A$ as left modules over $\R_A$ corespresent the individual homotopy group functors $\pi_k$ in $h\LMod_{\R_A}(\Sp)$. Thus, via (co)Yoneada homotopy operations $\pi_k\to \pi_{k+n}$ correspond to homotopy classes in $[\Sigma^{k+n}\R_A,\Sigma^k\R_A]\cong\pi_n\mathrm{end}_{\LMod_{\R_A}(\Sp)}(\R_A)$. Moreover, we can contravariantly identify the graded ring of natural endomorphisms of $\pi_*$ with the graded endomorphism ring $\pi_*\mathrm{end}_{\LMod_{\R_A}(Sp)}(\R_A)$. Finally, the isomorphism $\mathrm{end}_{\LMod_{\R_A}(\Sp)}(\R_A)\simeq \R_A^\op$ cancels out with the contravariance and we get the claimed identifications of graded rings. 
\end{proof}

Because $\R(\S)\boxtimes A$ is the image of $\S$ under the left adjoint to the forgetful functor $\Mod_A(\TCart_p)\to\Sp$, the underlying spectrum of $\R_A$ agrees with the underlying spectrum of $\R(\S)\boxtimes A$. 
By the Theorem \ref{additive_Cartier--Raynaud} we already have an additive description of the homotopy groups of $\R_A$ in terms of $\pi_*A$ given as
\[\pi_*\R_A\cong\bigoplus_{i\in\Z}\left(\pi_*A\oplus \pi_{*-1}A\right).\]
The goal of the remainder of this section is to compute the graded associative ring structure on $\pi_*\R_A$. For this we need the following lemma, which proves itself exactly like Corollary \ref{homotopy_operations} before.

\begin{lemma}\label{Algebra_Map_Through_Yoneda}
	Given $R\in\Alg(\Sp)$ an associative ring spectrum and $S_*$ an arbitrary graded ring in $\Ab$ such that the functor $\pi_*\colon\LMod_R(\Sp)\to\gr\Ab$ factors through $\LMod_{S_*}(\gr\Ab)$, then there is a graded ring map $S_*\to\pi_*R$ making the diagram commutative:
	\[\begin{tikzcd}
	\LMod_R(\Sp)\ar[r]\ar[d,"\pi_*"]&\LMod_{S_*}(\gr\Ab)\ar[d,"\fgt"]\\
	\LMod_{\pi_*R}(\gr\Ab)\ar[r,"\fgt"]\ar[ru,dashed]&\gr\Ab
	\end{tikzcd}\]
\end{lemma}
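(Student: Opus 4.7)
The plan is to imitate the Yoneda argument of Corollary \ref{homotopy_operations} essentially verbatim. First I would unpack the hypothesis that $\pi_*$ factors through $\LMod_{S_*}(\gr\Ab)$: this says that for every $M\in\LMod_R(\Sp)$ the graded abelian group $\pi_*M$ carries a natural $S_*$-action, so each $s\in S_n$ determines a natural transformation $\pi_*\to\pi_{*+n}$ of functors $\LMod_R(\Sp)\to\gr\Ab$. Functoriality of the action in $M$ together with the ring axioms on $S_*$ then force the assignment $s\mapsto(\text{associated natural transformation})$ to be a map of graded rings from $S_*$ into the graded ring of natural endotransformations of $\pi_*$.

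Next I would reuse the corepresentability input from Corollary \ref{homotopy_operations}. Since $\pi_k$ is corepresented on $h\LMod_R(\Sp)$ by $\Sigma^k R$, degree-$n$ natural endomorphisms of $\pi_*$ are in bijection with $[\Sigma^{k+n}R,\Sigma^k R]_{\LMod_R(\Sp)}\cong\pi_n\mathrm{end}_{\LMod_R(\Sp)}(R)$; because the endomorphism ring of $R$ as a left module over itself is $R^\op$, the opposite cancels with the contravariance of Yoneda, giving a graded ring isomorphism between the natural endomorphism ring of $\pi_*$ and $\pi_*R$. Composing with the ring map from $S_*$ produced in the previous step yields the desired homomorphism $S_*\to\pi_*R$.

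Commutativity of the triangle is then tautological: by construction the $S_*$-action on $\pi_*M$ factors through the canonical $\pi_*R$-action via $S_*\to\pi_*R$. I do not expect a real obstacle; the only subtle bookkeeping lies in ensuring that composition of natural transformations really matches multiplication in $S_*$ rather than its opposite, but this is already implicit in the Yoneda argument used for Corollary \ref{homotopy_operations} (and is precisely the reason $\R_A$ was defined with an opposite ring structure in Definition \ref{definition_Cartier_Raynaud}).
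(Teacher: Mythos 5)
Your proposal is correct and follows essentially the same route as the paper's proof: factor the $S_*$-action through the graded ring of natural endotransformations of $\pi_*$, identify that ring via corepresentability of $\pi_k$ by $\Sigma^k R$ in the $\Ab$-enriched homotopy category and the (co)Yoneda lemma with $\pi_*\mathrm{end}_{\LMod_R(\Sp)}(R)^\op\simeq\pi_*R$, and observe that commutativity of the triangle holds by construction. The bookkeeping point you flag about opposites versus contravariance is exactly the one the paper handles the same way.
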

\begin{proof}
	We will construct the map $S_*\to\pi_*R$ as a map of ordinary graded rings elementwise. Every homogenous element $a\in R_n$ gives rise to a natural transformation $\pi_0\to\pi_{n}$ as functors $\LMod_R(\Sp)\to\gr\Ab$ via the natural $S_*$-module structure on $\pi_*$. 
	Denote by $h\LMod_R(\Sp)$ the $\Ab$-enriched homotopy category of $\LMod_R(\Sp)$. Then for every $k$ the functor $\pi_k$ factors through $h\LMod_R(\Sp)$ and is corepresented there by $\Sigma^k R$ as a left $R$-module. Thus, via the (co)Yoneda lemma this natural transformation corresponds to a homotopy class in $[\Sigma^n R,R]\cong \pi_n\mathrm{end}_{\LMod_R(\Sp)}(R)$. 
	The multiplicative structures are given by composition and because of the contravariance of coYonda we get a ring map $S_*\to\pi_*\mathrm{end}_{\LMod_R(Sp)}(R)^\op$. Now the identification $\mathrm{end}_{\LMod_R(\Sp)}(R)^\op\simeq R$ finishes the proof.
\end{proof}

\begin{example}\label{twisted group algebra}
	For $R\in\Alg(\Sp)$ and a left $R$-module spectrum $M$ with (left) $R$-linear $S^1$-action, i.e. $M\in\left(\LMod_R(\Sp)\right)^{BS^1}$, the $S^1$-action induces (left) $R$-linear maps $\Sigma M\simeq S^1\otimes M\to M$. In particular $\pi_*M$ is a left module over $\pi_*(R)[d]$ for $|d|=1$. By Morita theory $(\RMod_R\Sp)^{BS^1}\simeq \RMod_{R[S^1]}\Sp$ we get a map $\pi_*(R)[d]\to\pi_*(R[S^1])$. In fact one can show that the map is surjective with kernel generated by $d^2-\eta d$.

\end{example}

We will need a variant of this example. The $\infty$-category $\Sp^{BS^1}$ has a symmetric monoidal structure coming from the underlying symmetric monoidal structure of $\Sp$ equipped with the diagonal action. 
In particular, we can consider $R\in\Alg(\Sp^{BS^1})$ and look at $M\in\LMod_R(\Sp^{BS^1})$, i.e. modules with $S^1$-equivariant module structure with respect to the non-trivial action on $R$.
The induced maps $\Sigma M\to M$ are not $R$-linear anymore, but twisted by the $S^1$-action on $R$, thus, the underlying spectrum of $M$ will only be a module over a twisted version of $R[S^1]$.

By abstract nonsense the $\infty$-category $\LMod_R(\Sp^{BS^1})$ is compactly generated by the module $R\otimes \S[S^1]$ equipped with the diagonal $S^1$-action. Let's write $R^\tau[S^1]$ for this object, it is sometimes called the twisted group algebra.
 
\begin{proposition}
	Let $R$ be an $S^1$-equivariant ring spectrum. With the notation from above we have 
	\[\pi_*(R^\tau[S^1])\cong \pi_*R\{d\}/(d^2=\eta d, dr=d_Rr+(-1)^{|r|}rd,\forall r\in\pi_*R)\] where $d_R$ is the image of $d$ under the induced unit map $\pi_*\S[S^1]\to \pi_*R$.B
\end{proposition}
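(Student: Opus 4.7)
The plan is to apply Lemma \ref{Algebra_Map_Through_Yoneda} to the ring spectrum $R^\tau[S^1]$ together with the graded ring $S_*$ defined by the presentation in the statement, and then to argue that the resulting map is an isomorphism by an additive comparison.

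First I would verify that for every $M \in \LMod_{R^\tau[S^1]}(\Sp) \simeq \LMod_R(\Sp^{BS^1})$, the homotopy groups $\pi_*M$ naturally form a left $S_*$-module. The $\pi_*R$-action is inherited from the $R$-module structure on $M$ along the unit $R \to R^\tau[S^1]$, while the degree one element $d$ acts via the $S^1$-action on $M$. The relation $d^2 = \eta d$ is imported from $\pi_*\S[S^1]$, exactly as in Example \ref{twisted group algebra}. The compatibility $dr = d_R(r) + (-1)^{|r|} rd$ is the main point: it encodes the $S^1$-equivariance of the module multiplication $R \otimes M \to M$ when the source carries the \emph{diagonal} $S^1$-action. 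Concretely, on $\pi_*(R \otimes M)$ the diagonal action satisfies the Koszul-signed Leibniz rule $d(r \otimes m) = d_R(r) \otimes m + (-1)^{|r|} r \otimes d(m)$; pushing forward along the module action yields $d(rm) = d_R(r) m + (-1)^{|r|} r \cdot dm$, which is precisely the operator identity $dr = d_R(r) + (-1)^{|r|} rd$ acting on $\pi_*M$.

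Having checked this, Lemma \ref{Algebra_Map_Through_Yoneda} produces a graded ring map $S_* \to \pi_*R^\tau[S^1]$ compatible with the above identifications of the $\pi_*R$- and $d$-actions. To see that this map is an isomorphism I would compare both sides additively. On the one hand, the underlying spectrum of $R^\tau[S^1]$ is $R \otimes \S[S^1] \simeq R \oplus \Sigma R$, giving $\pi_*R^\tau[S^1] \cong \pi_*R \oplus \pi_{*-1}R$. On the other hand, the relations in $S_*$ allow one to commute every occurrence of $d$ past elements of $\pi_*R$ and to reduce $d^2$ to $\eta d$, yielding a surjection $\pi_*R \oplus \pi_{*-1}R \twoheadrightarrow S_*$ given by $(a,b) \mapsto a + bd$. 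The composite with $S_* \to \pi_*R^\tau[S^1]$ sends $(a,b)$ to the corresponding element in the additive splitting (since $d \in S_*$ maps to the class of $d \in \pi_1\S[S^1]$, which generates the $\Sigma R$ summand). Thus the composite is the canonical additive isomorphism, which forces both arrows to be bijective and finishes the argument.

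The main technical obstacle will be pinning down the sign-correct Leibniz rule for the diagonal $S^1$-action on tensor products and confirming that it matches the stated relation $dr = d_R(r) + (-1)^{|r|} rd$; once that is in place, the rest is formal, being a combination of Lemma \ref{Algebra_Map_Through_Yoneda} and the additive decomposition $R^\tau[S^1] \simeq R \oplus \Sigma R$ of underlying spectra.
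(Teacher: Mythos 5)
Your proposal is correct and takes essentially the same route as the paper: the only real content in both is that the unit $\S[S^1]\to R^\tau[S^1]$ factors through the diagonal $\Delta\colon\S[S^1]\to\S[S^1]\otimes\S[S^1]$, so $\Delta_*d=d\otimes 1+1\otimes d$ forces the twisted commutation relation $dr=d_R(r)+(-1)^{|r|}rd$, after which the additive identification $\pi_*R^\tau[S^1]\cong\pi_*R\oplus\pi_{*-1}R$ pins down the presentation. One small correction to your last step: the composite $\pi_*R\oplus\pi_{*-1}R\to S_*\to\pi_*R^\tau[S^1]$ is not the canonical identification but the shearing $(a,b)\mapsto(a+bd_R,\pm b)$, which is still an isomorphism, so your sandwich argument closes as intended.
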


\begin{proof}
	The unit map $\S\to R$ on underlying ring spectra, extends to an algebra map $\S[S^1]\to R$. Moreover, due to the diagonal action on $R\otimes \S[S^1]$, this extension of the underlying unit map $\S\to R\otimes\S[S^1]$ factors as
	\[\S[S^1]\xrightarrow{\Delta}\S[S^1]\otimes\S[S^1]\rightarrow R\otimes\S[S^1]\]
	But now for $d\in\pi_*\S[S^1]$, we have $\Delta_*(d)=d\otimes 1+1\otimes d$, so that $d$ is sent to $(d_R\otimes 1+1\otimes d)$ under this map. Thus, the action of $d$ on $\pi_*(R^\tau[S^1])$ is given by multiplication with this element, giving the claim.
\end{proof}

In Proposition \ref{V,F,d-structure_on_homotopy_groups} we have recalled the structure on the homotopy groups of topological Cartier modules as described in \cite{AN18}. In particular, we have an example of the above Lemma \ref{Algebra_Map_Through_Yoneda}: 

\begin{example}\label{Example_Ring_Map}
	The homotopy groups functor $\pi_*\colon\TCart_p\to\gr\Ab$ factors through $\Mod_{S_*}\gr\Ab$ for $S_*:=\Z[v,f,d]/I_{\TCart_p}$ by Proposition \ref{V,F,d-structure_on_homotopy_groups}. Thus, we get a map $\Z[v,f,d]/I_{\TCart_p}\to \R_{\S[V]}$. Moreover, if $A\in\Alg(\TCart_p)$ is an algebra, then for $M\in\Mod_A(\TCart_p)$ the underlying spectrum of $M$ also carries a natural $A$-algebra structure by Corollary \ref{lax_monoidal}, and we get a map $\pi_*A\to\R_A$.
	
	Putting this together, we get a map from the coproduct of associative algebras, explicitly the map
	\begin{equation}\label{crucial map}\pi_*A\{v,f,d\}/I_{\TCart_p}\to \pi_*\R_A\end{equation}
	where the left hand side is the free graded associative, non-central $\pi_*A$ algebra on the generators $v, f$ and $d$.
\end{example}

In order to understand the map \eqref{crucial map} by the proof of Lemma \ref{Algebra_Map_Through_Yoneda} we have to understand the operations $V, F$ and $d$ on $\pi_*(\R(\S)\boxtimes A)$. This will be easier if we assume $A=A_0[V]$ for some $A_0\in\Alg(\CycSp_p^\Fr)$, as we have easier description of the Verschiebung und Frobenius in this case. Before we dive into the explicit identifications, let's recall the cyclotomic or topological Cartier module structures on important players:
\begin{equation}\label{free_Frobenius}
	\S[\W_p]\simeq \bigoplus_{m\geq 0}(p^*)^m\S[S^1],\qquad F\colon p^*(p^*)^m\S[S^1]\xrightarrow[\sim]{\id}(p^*)^{m+1}\S[S^1]
\end{equation}
and for $M\in\CycSp^\Fr_p$ we have
\begin{align}
	&M[V]\simeq \bigoplus_{n\geq 0}p_!^nM\nonumber\\
	&V\colon p_!^nM\xrightarrow{\eta} p^*p_!^{n+1}M,\qquad F\colon
	\begin{cases}
		p^*M\to M& n=0\\
		p^*p_!^{n}M\xrightarrow{p^*\Nm}p^*p_*p_!^{n-1}M\xrightarrow{\varepsilon} p_!^{n-1}M& n>0
	\end{cases}	\label{free_Verschiebung}
\end{align}
where $\eta, \varepsilon$ are the unit and counit of their respective adjunctions.

\begin{lemma}\label{generators}
	Let $A_0\in\Alg(\CycSp_p^\Fr)$ and set $A:=A_0[V]$. Then $A$ is an algebra in $\TCart_p$ and the map \eqref{crucial map} from above $(\pi_*A)\{v,f,d\}/I_{\TCart_p}\to \pi_*\R_A$ is surjective.
\end{lemma}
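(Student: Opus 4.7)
The plan is to reduce surjectivity to identifying a generating set of $\pi_* \R_A$ as a $\pi_*A$-module and then exhibiting each generator as the image under $\phi$ of an explicit element. The key is to use the model $\R(\S) \boxtimes A \simeq (\S[\W_p]\otimes A_0)[V]$ afforded by the monoidality of $(-)[V]$ from Theorem \ref{symmetric_monoidal_structre}; set $N := \S[\W_p] \otimes A_0$.

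Combining the underlying decomposition $\R(\S) \boxtimes A \simeq \lbar\R \otimes A$ from Theorem \ref{additive_Cartier--Raynaud} with the fact that $\lbar\R$ decomposes as $\bigoplus_{i \in \Z} \S[S^1]$, one obtains a splitting of graded $\pi_*A$-modules
\[
\pi_* \R_A \cong \bigoplus_{i \in \Z} \left(\pi_*A \cdot e_i \oplus \pi_*A \cdot \epsilon_i\right)
\]
with $|e_i| = 0$ and $|\epsilon_i| = 1$, where $e_i$ (resp. $\epsilon_i$) generates the degree-$0$ (resp. degree-$1$) part of the $i$-th summand $\S[S^1]_i \otimes A$. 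Since $\phi$ is a ring map restricting to the identity on the subring $\pi_*A \hookrightarrow \pi_*\R_A$, it is automatically left $\pi_*A$-linear, so surjectivity reduces to proving $e_i, \epsilon_i \in \mathrm{im}(\phi)$ for every $i \in \Z$.

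Starting from the unit $1 \in \pi_0 N[V]$, which sits in the $(n,m) = (0,0)$ summand, I would compute the action of $V, F, d$ using the explicit formulas \eqref{free_Frobenius}--\eqref{free_Verschiebung}. The Verschiebung acts via the unit $\eta$ of $p_! \dashv p^*$ and sends $1$ to a generator of the $(n,m) = (1,0)$ summand $p_!(\S[S^1]\otimes A_0)$; unwinding the projection-formula reorganization from the proof of Theorem \ref{additive_Cartier--Raynaud} identifies this image with $e_1$, and iterating yields $V^i \cdot 1 = e_i$ for $i \geq 0$. The Frobenius on the $n = 0$ summand acts as the diagonal Frobenius lift $\phi_{\S[\W_p]} \otimes \phi_{A_0}$; the shift $m \mapsto m+1$ on $\S[\W_p]$ together with $\phi_{A_0}(1) = 1$ (which holds because $A_0$ is an algebra) yields $F \cdot 1 = e_{-1}$, and inductively $F^j \cdot 1 = e_{-j}$ for $j \geq 0$. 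Finally, because the $S^1$-action on $\lbar\R \otimes A$ is diagonal and $d_A$ annihilates $1 \in \pi_0 A$, one has $d \cdot e_i = \epsilon_i$, producing all the required generators.

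The main technical obstacle I foresee is the identification $V^i \cdot 1 = e_i$ (and its Frobenius analogue), which requires carefully tracking the iterated projection formula $p_!(X \otimes p^*Y) \simeq p_!X \otimes Y$ used to reorganize the $(n,m)$-indexed summands of $N[V]$ into the $(i,k')$-indexed summands of $\lbar\R \otimes A$, and verifying that the unit and counit of the $p_! \dashv p^*$-adjunction preserve the distinguished $\pi_0$-classes at each step. Once this is in place the remainder of the argument is essentially formal; notably, the relations in $I_{\TCart_p}$ play no role in the surjectivity statement itself, though they will presumably be needed when computing the kernel in the main theorem.
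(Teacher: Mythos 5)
Your strategy coincides with the paper's: realize $\R(\S)\boxtimes A\simeq(\S[\W_p]\otimes A_0)[V]$ via the monoidality of $(-)[V]$, use the additive decomposition from Theorem \ref{additive_Cartier--Raynaud}, and track the image of the unit under iterated $V$, $F$ and $d$ using the explicit formulas \eqref{free_Frobenius} and \eqref{free_Verschiebung}. However, two steps fail as literally stated, and both failures come from the same phenomenon that the paper takes care to address: the $S^1$-action and the $\pi_*A$-module structure on the Frobenius-indexed summands $(p^*)^j\S[S^1]\otimes A$ are twisted (the circle acts through the $p^j$-fold cover) relative to the Verschiebung-indexed ones. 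Concretely: (1) your claim $d\cdot e_i=\epsilon_i$ is false for the negative-index summands, since $df^j=p^jf^jd$ means left multiplication by $d$ only produces $p^j\epsilon_{-j}$, which does not generate that summand unless $\pi_*A$ is $p$-divisible; the actual generator $f^jd$ must be produced by multiplying $f^j$ by $d$ on the \emph{right}. (2) The reduction of surjectivity to hitting $e_i,\epsilon_i$ via ``left $\pi_*A$-linearity'' presumes a uniform splitting as left $\pi_*A$-modules, but the summand on $v^i$ is free as a \emph{right} $\pi_*A$-module, its left action being twisted by $F_A^i$ (via $xv=vF_A(x)$), which need not be surjective; so left multiplication alone does not exhaust it.

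Both gaps are repaired by the same observation: $\mathrm{im}(\phi)$ is a subring containing $\pi_*A$, hence closed under multiplication by $\pi_*A$ and by $d$ on \emph{both} sides, so it suffices that each summand be generated over $\pi_*A$ acting on the appropriate side by one of $v^i$, $dv^i$, $f^j$, $f^jd$. This is exactly how the paper organizes its Claims 1 and 2 (right modules on $v^i$; left modules on $f^j$ and $f^jd$), and the technical core you correctly isolate --- tracking the unit classes through the projection-formula reindexing and the units/counits of $p_!\dashv p^*\dashv p_*$ --- is what those claims carry out.
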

\begin{proof}
	Recall from the proof of Lemma \ref{Algebra_Map_Through_Yoneda} the construction of this map. The elements $v, f$ and $d$ are send to $\pi_*\R(A)\cong \pi_*\mathrm{end}(\R(\S)\boxtimes A)$ correspoding to the endomorphism of $\R(\S)\boxtimes A$ corepresenting $V, F$ and $d$ on the homotopy groups functor $\LMod_A(\TCart_p)\to\gr\Ab$.
	We will use that we can completely describe the $V, F$ and $d$ operators on $\pi_*(\R(\S)\boxtimes A)$. For this let us denote by $v, f$ resp. $d$ the image of the mulitplicative unit $1\in\pi_0\R(\S)\boxtimes A$ under $V, F$ resp. $d$. In other words, $v, f$ and $d$ are the image of their counterparts from $(\pi_*A)\{v,f,d\}$ under the map above.

	By our computation in Theorem \ref{additive_Cartier--Raynaud} and more precisely by the proof of it, we already have the additive identifications
	\[\R(\S)\boxtimes A\simeq \bigoplus_{n,m\geq 0}p_!^n\left((p^*)^m\S[S^1]\otimes A_0\right)\simeq \bigoplus_{i\geq 0} \Big(p_!^i\S[S^1]\otimes A\Big)\oplus\bigoplus_{j>0} \Big((p^*)^j\S[S^1]\otimes A\Big)\]
	where the $S^1$-action on the tensor products is given diagonally. In particular, as $p_!\S[S^1]\simeq \S[S^1]$ in $\Sp^{BS^1}$, we have $p_!^i\S[S^1]\otimes A\simeq A^\tau[S^1]$. Nevertheless, we will continue writing the term on the left hand side in order to be able to distinguish different summands.
	
	\textit{Claim 1:} The summands $\pi_*(p_!^i\S[S^1]\otimes A)$ are free as right $\pi_*(A^\tau[S^1])$-modules on $v^i$ for all $i\geq 0$. Furthermore, $V$ maps $A$ in $p_!^i\S[S^1]\otimes A$ isomorphically to $A$ in $p_!^{i+1}\S[S^1]\otimes A$.
	
	The first statement follows from the second because the $i$-fold Verschiebung of 1 is $v^i$. So we want to have a close look at the Verschiebung $\R(\S)\boxtimes A\to p^*(\R(\S)\boxtimes A)$. Because $\R(\S)\boxtimes A$ by assumption on $A$ lies in the image of $-[V]\colon\CycSp_p^\Fr\to\TCart_p$ we can apply the description from \eqref{free_Verschiebung}, i.e. the Verschiebung is given by the unit $\id\to p^*p_!$ and we get
	\[p_!^i\S[S^1]\otimes A\to p^*p_!^{i+1}(\S[S^1]\otimes  A)\simeq p^*p_!^{i+1}\S[S^1]\otimes p^*A\hookrightarrow p^*(\R(\S)\boxtimes A),\]
	not affecting the $A$-factor. Moreover, indeed, the map $p_!^i\S[S^1]\to p^*p_!^{i+1}\S[S^1]$ is given by the identity on the $\S$-summand. 
	
	\textit{Claim 2:} The two summands $\pi_*( A)$ resp. $\pi_*( A[1])$ of $\pi_* ((p^*)^j\S[S^1]\otimes A)$ are free as left $\pi_*( A)$-modules on $f^j$ resp. $f^j d$ for $j>0$. Moreover, the Frobenius operator
	\[\pi_*( A)\cdot 1\to \pi_*( A)\cdot f,\qquad \pi_*( A)\cdot d\to \pi_*( A)\cdot f d\]
	is given by the Frobenius of $\pi_*(A)$. That is, for $x\in\pi_*( A)\subset \pi_*(\R(\S)\boxtimes A)$ we have the relation $fx=F_{ A}(x)f$, where $F_A\colon \pi_*A\to\pi_*A$ is the Frobenius on $A$.
	
	Before we prove the claim, note that $(p^*)^j\S[S^1]\otimes  A$ is not free as a left nor right $A^\tau[S^1]$-module because the $S^1$-actions are different, it is sped up by a factor of $p^j$ on the former. If we have proven the claim, this yields another manifestation of the known identity $df^j=p^jf d$.
	
	In the claim, again the first statement follows from the last one because the Frobenius $F_A$ on $\pi_*( A)$ as a ring map sends 1 to 1. Thus, we are reduced to understanding the Frobenius $p^*(\R(\S)\boxtimes A)\to\R(\S)\boxtimes A$ on the summand $(p^*)^j\S[S^1]\otimes  A$. Splitting up $A\simeq A_0[V]\simeq\bigoplus p_!^n A_0$ into further summands we will look at the Frobenius for $n=0$ and $n>0$ separately, as described in \eqref{free_Verschiebung}. In the first case it is given by the Frobenius on the tensor product
	\[p^*((p^*)^j\S[S^1]\otimes A_0)\hookrightarrow \R(\S)\boxtimes A,\]
	which is the tensor product of the Frobenius described in \eqref{free_Frobenius}, i.e. the identiy, and the Frobenius on $A_0$. And for $n>0$, we have the Frobenius given as
	\[
	\begin{tikzcd}
	p^*p_!^n((p^*)^{n+j}\S[S^1]\otimes A_0) \ar[r,"p^*\Nm"]\ar[d,phantom,sloped,"\simeq"]& p^*p_*p_!^{n-1}((p^*)^{n+j}\S[S^1]\otimes A_0)\ar[r]\ar[d,phantom,sloped,"\simeq"]& p_!^{n-1}((p^*)^{n+j}\S[S^1]\otimes A_0)\ar[d,phantom,sloped,"\simeq"]\\
	p^*(p^*)^j\S[S^1]\otimes p^*p_!^nA_0\ar[r,"{\id\otimes p^*\Nm}"]&(p^*)^{j+1}\S[S^1]\otimes p^*p_*p_!^{n-1}A_0\ar[r]&(p^*)^{j+1}\S[S^1]\otimes p_!^{n-1}A_0.
	\end{tikzcd}
	\]
	It is precisely the Frobenius of $A_0[V]$ as described in \eqref{free_Frobenius} tensored with the identity on $(p^*)^{j+1}\S[S^1]$.
	
	Putting both claims together, we see that $v^i, dv^i, f^j, f^jd$ give a full list of generators of $\pi_*(\R(\S)\otimes A)$ as a $\pi_*( A)$ module, and they all lie in the image of our given map.
\end{proof}

We already know that the kernel of the map $(\pi_*A)\{v,f,d\}\to\pi_*\R_A$ contains the ideal $I_{\TCart}$. But for example, we have also already seen, that in $\pi_*\R_A$ we find relations like $fx=F_A(x)f$ for $x\in\pi_*A$. We want to establish all relations of this sort:

\begin{lemma}\label{relations}
	In $\pi_*\R_A$ we have the following relations for all $x\in\pi_*A$:
	\[
	dx=d_A(x)+xd,\qquad fx=F_{ A}(x)f\qquad xv=vF_{ A}(x)\qquad vxf=V_{ A}(x)\]
\end{lemma}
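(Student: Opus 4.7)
The approach is to interpret each side of the four identities as a natural endotransformation of $\pi_*\colon\LMod_A(\TCart_p)\to\gr\Ab$ via Corollary \ref{homotopy_operations}, so that two elements of $\pi_*\R_A$ agree precisely when they induce equal operations $\pi_*M\to\pi_{*+k}M$ for every $A$-module $M$ in $\TCart_p$. Under the map $(\pi_*A)\{v,f,d\}/I_{\TCart}\to\pi_*\R_A$ of Example \ref{Example_Ring_Map}, the generator $x\in\pi_*A$ corresponds to $A$-multiplication and $v,f,d$ correspond to the Verschiebung, Frobenius, and the $S^1$-action's infinitesimal generator on $\pi_*M$. Each relation therefore becomes a compatibility statement between the topological Cartier module structure and the $A$-action on $M$, which by Yoneda it suffices to check on the corepresenting object $\R(\S)\boxtimes A$.

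For $fx=F_A(x)f$, the structure map $\mu\colon A\boxtimes M\to M$ is a morphism in $\TCart_p$. Since the Frobenius is a symmetric monoidal natural transformation $p^*(-)\Rightarrow\id$ on $\TCart_p$ (via Theorem \ref{symmetric_monoidal_structre}), the identification $p^*(A\boxtimes M)\simeq p^*A\boxtimes p^*M$ yields $F_M\circ\mu=\mu\circ(F_A\boxtimes F_M)$; on homotopy groups this reads $F_M(xm)=F_A(x)F_M(m)$, which is exactly $fx=F_A(x)f$. The Leibniz relation $dx=d_A(x)+(-1)^{|x|}xd$ is argued analogously: the diagonal $S^1$-action on $A\boxtimes M$ makes $d$ act on $\pi_*(A\boxtimes M)$ as $d_A\otimes\id+\id\otimes d_M$ up to graded signs, which is precisely the Leibniz rule.

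The harder relations $xv=vF_A(x)$ and $vxf=V_A(x)$ translate to the semilinear projection formulae $V_M(F_A(x)m)=xV_M(m)$ and $V_M(xF_M(m))=V_A(x)m$. Unlike the Frobenius, $V$ is not itself a symmetric monoidal natural transformation on $\TCart_p$, so there is no direct argument from Theorem \ref{symmetric_monoidal_structre}. The plan is to combine the factorization $\Nm_{C_p}=F\circ V$ of the norm with the already-established multiplicativity of $F$, or, more concretely, to verify both formulae directly on the summand decomposition of $\R(\S)\boxtimes A$ for $A=A_0[V]$ established in the proof of Lemma \ref{generators}. Using the explicit descriptions \eqref{free_Frobenius} and \eqref{free_Verschiebung} of $V$ and $F$ on objects of the form $A_0[V]$, one matches multiplication by elements of $\pi_*A$ on the $p_!^i\S[S^1]\otimes A$ and $(p^*)^j\S[S^1]\otimes A$ summands against the explicit Verschiebung and Frobenius, exactly in the style already carried out in Lemma \ref{generators}.

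The main technical obstacle is the semilinear compatibility of $V$ with the $A$-module structure: whereas the Frobenius identity follows formally from symmetric monoidality, the Verschiebung identities require extracting the projection formula from the bimodule-like structure provided by the norm factorization. Once this compatibility is in hand, each of the four identities reduces to a routine bookkeeping computation on the summand decomposition, and the lemma follows.
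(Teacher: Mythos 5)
Your general framework (read both sides as natural operations on $\pi_*$ of $A$-modules, reduce to the corepresenting object $\R(\S)\boxtimes A$ with $A=A_0[V]$, and use the summand descriptions \eqref{free_Frobenius}, \eqref{free_Verschiebung}) is the right one, and your treatments of $dx=d_A(x)+(-1)^{|x|}xd$ (via the diagonal $S^1$-action, i.e.\ the twisted group algebra $A^\tau[S^1]$) and of $fx=F_A(x)f$ match the paper in substance. Two caveats on the latter: the monoidality of the Frobenius under $\boxtimes$ is plausible (it is part of the Green-functor formalism), but Theorem \ref{symmetric_monoidal_structre} does not assert it --- that theorem is about $(-)[V]$ --- and the paper instead verifies $fx=F_A(x)f$ by the explicit computation in Claim 2 of Lemma \ref{generators}. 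So as written, that step rests on an unproved (if true) statement.

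The genuine gap is the relation $xv=vF_A(x)$, which you yourself flag as ``the main technical obstacle'' and then defer. The direct summand verification you propose is delicate precisely because left multiplication by $x\in\pi_*A$ on the summand $p_!^i\S[S^1]\otimes A$ passes through the projection formula for $\boxtimes$ against $p_!$ --- which is essentially the identity you are trying to prove, so the plan risks circularity or at least leaves the hard point untouched; and the alternative ``combine $\Nm=F\circ V$ with multiplicativity of $F$'' is too vague to assess. The paper resolves this \emph{without} any further geometric input, by a purely algebraic argument: Claim 1 of Lemma \ref{generators} shows $v\cdot\pi_*A$ is free of rank one as a right $\pi_*A$-module on $v$, and since this summand is also stable under left multiplication by $\pi_*A$, one must have $xv=vx'$ for a unique $x'\in\pi_*A$. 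Applying $fd(-)$ and using the already-established relations ($fdv=d$ modulo $\eta$, $fx=F_A(x)f$, $fv=p$, the Leibniz rule, and $pF_A(d_A(x))=d_A(F_A(x))$ from $df=pfd$ acting on $\pi_*A$) pins down $fd\,xv=dF_A(x)=fdv\,F_A(x)$, hence $x'=F_A(x)$. Only the remaining relation $vxf=V_A(x)$ is then checked by an explicit identification of the Verschiebung on the summand $p^*\S[S^1]\otimes A_0[V]$ with that of $A_0[V]$. You should either adopt this deduction or supply the projection-formula computation you postpone; as it stands the proposal does not establish $xv=vF_A(x)$.
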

\begin{proof}
	For the first equation recall the discussion from example \ref{twisted group algebra}. The object $\R_A$ is an algebra over $A^\tau[S^1]$ sending $d$ to $d$, thus, forcing the claimed relation in $\pi_*\R_A$.
	 
	The second equation, we have already seen in the proof of Lemma \ref{generators}. From this proof we have also already seen that left multiplication by $v$ induces an equivalence $\pi_*A\to v\cdot \pi_*A$, of right $\pi_*A$-modules. However, $v\cdot \pi_*A$ is a $\pi_*A$-bimodule, so $xv\in v\cdot \pi_*( A)$ has to be of the form $vx'$ for some $x'\in\pi_*A$. In $I_{\TCart}$ we have the relation $fdv=d$ (only modulo $\eta$ for $p=2$, but we can ignore this for the proof), so that $fdv x'=d x'$, and we can read of $x'$. On the other hand we have by our previously proven relations
	\begin{align*}
	fd xv=& fd_A(x)v+(-1)^{|x|}fxdv=F_A(d_A(x))fv+(-1)^{|x|}F_A(x)fdv\\
	=&pF_A(d_A(x))+(-1)^{|x|}F_A(x)d=d_A(F_A(x))+(-1)^{|F_A(x)|}F_A(x)d=dF_A(x)
	\end{align*}
	Thus $x'=F_A(x)$ as claimed.
	
	For the last relation we have to again look into the detail of the Verschiebung on $\pi_*(\R(\S)\otimes A)$. The element $xf$ lives in $\pi_*( A)\subset\pi_*(p^*\S[S^1]\otimes A_0[V])$ and on this summand the Verschiebung is given as follows:
	\[
	\begin{tikzcd}
	p_!^n((p^*)^{n+1}\S[S^1]\otimes A_0)\ar[r]\ar[d,phantom,sloped,"\simeq"]& p^*p_!^{n+1}((p^*)^{n+j}\S[S^1]\otimes A_0)\ar[d,phantom,sloped,"\simeq"]\\
	p^*\S[S^1]\otimes p_!^nA_0\ar[r]&p^*\S[S^1]\otimes p^*p_!^{n+1}A_0
	\end{tikzcd}
	\]
	which is precisely the Verschiebung on $ A_0[V]$, as described in \eqref{free_Verschiebung}, tensored with $p^*\S[S^1]$ and the last claim follows.
\end{proof}

Let us denote by $I_\R$ the ideal in $\pi_*(\S[V])\{v,f,d\}$ generated by $I_{\TCart}$ together with the relation from Lemma \ref{relations}. Then in the ring $\pi_*(A)\{v,f,d\}/I_\R$ every string of symbols $v, f,d$ and $x\in\pi_*(A)$ can be written as a multiple of one of the following generators:
\begin{align*}
v^i,\quad dv^i\qquad i\geq 0\\
f^j,\quad f^j d \qquad j>0
\end{align*}
As a $\pi_*(A)$-module this is precisely equivalent to what we computed $\pi_*\R_A\cong\pi_*(\R(\S)\otimes A)$ to be additively. In particular, we have now proven the case $A=A_0[V]$ of the following theorem:

\begin{theorem}\label{main_theorem} Given an algebra $A\in\Alg(\TCart_p)$, the $\mathbb{E}_1$-algebra structure on $\R_A$ induces an isomorphism of associative graded rings
	\[\pi_*\R_A\cong(\pi_*A)\{v,f,d\}/I_K\]
	where $d$ sits in degree 1, $v,f$ in degree 0 and the ideal $I_K$ is generated by the relations:
	\begin{alignat*}{3}
	f v&=p & vxf&=V_A(x)\\
	d f&=pf  d&v  d&=p d v\\
	fx&=F_A(x)f\qquad&xv&=F_A(x)v\\
	f  d v&= d\ (+\eta\ \text{if}\ p=2)& d^2&=\eta  d\\
	dx&=d_A(x)+(-1)^{|x|}xd
	\end{alignat*}
	for all $x\in\pi_*A$. Here $V_A,F_A\colon\pi_*A\to\pi_*A$ come from the Verschiebung and Frobenius and $d_A\colon\pi_*A\to\pi_{*+1}A$ from the $S^1$-action on the topological Cartier module $A$.
\end{theorem}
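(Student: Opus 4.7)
The plan is to promote the case $A = A_0[V]$ (already established by combining Lemma \ref{generators} and Lemma \ref{relations}) to an arbitrary $A \in \Alg(\TCart_p)$. The starting point is the graded ring map $\phi_A \colon (\pi_*A)\{v,f,d\}/I_\TCart \to \pi_*\R_A$ from Example \ref{Example_Ring_Map}, which is defined for every $A$. The first task is to check that the additional relations beyond $I_\TCart$ hold in $\pi_*\R_A$ universally: the differential relation $dx = d_A(x) + (-1)^{|x|} xd$ is the content of the twisted group algebra structure on $\R_A$ over $A^\tau[S^1]$, while the relations involving $F_A$ and $V_A$ are semi-linearity statements that follow from the fact that for every $M \in \LMod_A(\TCart_p)$ the structure maps $F \colon p^*M \to M$ and $V \colon p_!M \to M$ are morphisms of $A$-modules, appropriately twisted by $F_A$. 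Once these are verified, $\phi_A$ descends to a graded ring map $(\pi_*A)\{v,f,d\}/I_\R \to \pi_*\R_A$ for every $A$.

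The second task is to show $\phi_A$ is bijective by matching additive structures. Using the relations in $I_\R$, every element of the source rewrites as a right $\pi_*A$-linear combination of the generators $\{v^i, dv^i, f^j, f^jd : i \geq 0, j > 0\}$, exhibiting the source as a free right $\pi_*A$-module on these generators. On the other hand, Theorem \ref{additive_Cartier--Raynaud} gives $\pi_*\R_A \cong \bigoplus_{i \in \Z}(\pi_*A \oplus \pi_{*-1}A)$ as a $\pi_*A$-module. It then suffices to check that $\phi_A$ sends the source generators to a basis of the target, which is exactly the content of Lemma \ref{generators} in the case $A = A_0[V]$. For general $A$, I would deduce this from the case $A = \S[V]$ by naturality along the unit map $\S[V] \to A$: since $\R(\S) \boxtimes A$ is obtained from $\R(\S)$ by base change, the images of the generators in $\pi_*\R_A$ are the images of their counterparts in $\pi_*\R_{\S[V]}$ under the induced natural map $\pi_*\R_{\S[V]} \to \pi_*\R_A$.

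The main obstacle I anticipate is making the naturality step precise: one needs a genuinely functorial additive decomposition of $\pi_*\R_A$ in $A$, and a verification that the images $\phi_A(v^i), \phi_A(dv^i), \phi_A(f^j), \phi_A(f^jd)$ correspond to the unit generators of the corresponding summands. This amounts to tracing through the identifications in the proof of Theorem \ref{additive_Cartier--Raynaud}, where $\R(\S) \boxtimes A \simeq \lbar\R \otimes A$, and checking that the generators $v, f, d$, being natural operations, restrict correctly from the universal case $A = \S[V]$. Once this compatibility is in hand, bijectivity follows because $\phi_A$ becomes a map of free right $\pi_*A$-modules on the same indexing set sending basis to basis, hence an isomorphism of graded rings.
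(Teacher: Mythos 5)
Your overall architecture (reduce to the explicitly computed free case, then match the additive decomposition from Theorem \ref{additive_Cartier--Raynaud} against the monomial basis $v^i, dv^i, f^j, f^jd$) is the same as the paper's, and your treatment of the relation $dx=d_A(x)+(-1)^{|x|}xd$ via the $A^\tau[S^1]$-algebra structure is exactly what the paper does. The gap is in your first step, the verification of the semilinear relations $fx=F_A(x)f$, $xv=vF_A(x)$ and $vxf=V_A(x)$ for an \emph{arbitrary} $A\in\Alg(\TCart_p)$. These relations involve arbitrary $x\in\pi_*A$, so they cannot be pulled back along the unit $\S[V]\to A$ (only elements in the image of $\pi_*\S[V]$ would be covered), and the ``fact'' you invoke --- that $F\colon p^*M\to M$ and $V\colon p_!M\to M$ are $A$-module maps suitably twisted by $F_A$ --- is not something you can take for granted: it is essentially equivalent to the relations themselves, and proving it requires controlling how $V$ and $F$ interact with the Mackey-functor tensor product $\boxtimes$ (note that $vxf=V_A(x)$ is a projection formula, not a semilinearity statement, and that the paper only establishes these identities by explicit computation with the formulas \eqref{free_Frobenius} and \eqref{free_Verschiebung}, which are available only for objects in the image of $(-)[V]$).

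The paper's device for closing exactly this gap is different from your base change along the unit: it uses the \emph{counit} $A[V]\to A$ of the free--forget adjunction with $\CycSp_p^\Fr$. This is a map of algebras in $\TCart_p$ whose source is of the form $A_0[V]$, so Lemmas \ref{generators} and \ref{relations} apply to it verbatim, and it is split surjective on underlying homotopy groups (the $n=0$ summand of $A[V]=\bigoplus_n p_!^nA$ maps isomorphically). Since $\pi_*(A[V])\to\pi_*A$ is surjective and compatible with $V$, $F$ and $d$, both the generation statement and all relations for arbitrary $x\in\pi_*A$ descend at once, with no new semilinearity input. If you want to keep your route, you must supply an independent proof of the semilinearity and projection formulas for general $A$; otherwise, replace the unit $\S[V]\to A$ by the counit $A[V]\to A$ in your reduction. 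A minor further point: the summands $\pi_*A\cdot f^j$ and $\pi_*A\cdot f^jd$ are free as \emph{left} $\pi_*A$-modules while $v^i\cdot\pi_*A$ and $dv^i\cdot\pi_*A$ are free as \emph{right} modules (one cannot convert $\pi_*A\cdot f^j$ into $f^j\cdot\pi_*A$ unless $F_A$ is surjective), so the final comparison should be phrased as an isomorphism of graded abelian groups summand by summand rather than of free right $\pi_*A$-modules.
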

\begin{proof}
	Let $A\in\Alg(\TCart_p)$, then the counit of the free-forget adjunction between $\TCart_p$ and $\CycSp_p^\Fr$ gives a map $A[V]\to A$ of topological Cartier modules. The explicit identification of the Verschiebung and Frobenius structure on $\R(\S)\boxtimes A[V]$ above descends along the induced map
	\[\R(\S)\boxtimes A[V]\to\R(\S)\boxtimes A\]
	and the generators $v^i, dv^i, f^j$ and $f^jd$ with $i\geq0, j>0$ we constructed on $\R(\S)\boxtimes A[V]$ give here
	\[\pi_*(R(\S)\boxtimes A)\simeq\bigoplus_{i\geq 0}\left(v^i\cdot\pi_*A\oplus dv^i\cdot \pi_*A\right)\oplus\bigoplus_{j> 0}\left(\pi_*A\cdot f^j\oplus \pi_*A\cdot f^jd\right).\]
	Moreover, because the map $A[V]\to A$ preserves the topological Cartier module structure, it is compatible with the $V,F$ and $d$ operator on the homotopy groups. And because on underlying spectra it splits, thus is surjective on $\pi_*$, we recover all relations claimed. 

	Finally, this describes the associative ring structure on the endomorphism spectrum of $\R(\S)\boxtimes A$ and we get get the claimed description of $\pi_*\R_A$.
\end{proof}

\printbibliography
\end{document}